% 
% Stability of mean convex cones under mean curvature flow 
% Julie Clutterbuck and Oliver C. Schn\"urer 
% November 2008 
%  
\documentclass{amsart}  
  
\usepackage{citesort}  
\usepackage{enumerate}  
\usepackage{epsfig} 
  
\setlength{\parindent}{0pt}  
\setlength{\parskip}{0.5em}  
  
\newtheorem{theorem}{Theorem}[section]  
\newtheorem{lemma}[theorem]{Lemma}  
\newtheorem{corollary}[theorem]{Corollary}

\theoremstyle{definition}  
\newtheorem{definition}[theorem]{Definition}

\theoremstyle{remark}  
\newtheorem{remark}[theorem]{Remark}  
  
\numberwithin{equation}{section}

\def\dt{\frac{d}{dt}}  
\long\def\umbruch{{\displaybreak[1]}}  
\def\R{{\mathbb{R}}}  
\def\N{{\mathbb{N}}}  
\DeclareMathOperator{\graph}{graph}  
\DeclareMathOperator{\tr}{tr}  
\DeclareMathOperator{\divergence}{div}  
\def\epsilon{\varepsilon}  
\def\ol#1{\overline{#1}} 
\newcommand{\Sphere}{\mathbb{S}}  
\renewcommand{\div}{\divergence}  
\newcommand{\K}{\mathcal{K}}
 
\begin{document}  
  
\title{Stability of mean convex cones\\ under mean curvature flow}  
  
\author{Julie Clutterbuck and Oliver C. Schn\"urer}  
\address{Centre for Mathematics and its Applications,  
  Mathematical Sciences Institute,  
  Australian National University, Canberra, ACT 0200, Australia}  
\email{Julie.Clutterbuck@maths.anu.edu.au}  
\address{Freie Universit\"at Berlin, Arnimallee 3, 14195 Berlin, Germany}  
\curraddr{Centre for Mathematics and its Applications,  
  Mathematical Sciences Institute,  
  Australian National University, Canberra, ACT 0200, Australia}  
\def\fuhome{@math.fu-berlin.de} 
\def\anuhome{@maths.anu.edu.au} 
\email{Oliver.Schnuerer\fuhome, 
  Oliver.Schnuerer\anuhome}  
\thanks{Supported by ANU, ARC, AvH foundation and DFG}  
  
\subjclass[2000]{53C44, 35B35}  
%Primary ?????, ?????; Secondary ?????, ?????}  
% 53C44 Geometric evolution equations (mean curvature flow)  
% 35Bxx Qualitative properties of solutions  
% 35B35 Stability, boundedness  
  
\date{November 2008.}  
  
\dedicatory{}  
  
\keywords{Mean curvature flow, cone, stability}  
  
\begin{abstract}
We consider graphical solutions to mean curvature flow and obtain a
stability result for homothetically expanding solutions coming out of
cones of positive mean curvature: If another solution is initially
close to the cone at infinity, then the difference to the
homothetically expanding solution becomes small for large times.
The proof involves the construction of appropriate barriers. 
\end{abstract}  
  
\maketitle  
  
\markboth{STABILITY OF MEAN CONVEX CONES UNDER MEAN CURVATURE  
FLOW}{JULIE CLUTTERBUCK AND OLIVER C. SCHN\"URER}

\section{Introduction}

We study solutions to graphical mean curvature flow  
\begin{equation}\label{mcf}  
\dot u=\sqrt{1+|Du|^2}\,\,  
\div\left(\frac{Du}{\sqrt{1+|Du|^2}}\right)  
\equiv\sqrt{1+|Du|^2}\,\,H[u]  
\end{equation}  
for functions $u\in C^\infty_{loc}\left(\R^n\times(0,\infty)\right)
\cap C^0_{loc}\left(\R^n\times[0,\infty)\right)$. This equation is
known to have a solution for initial data $u(\cdot,0)\in
C^0_{loc}\left(\R^n\right)$. Let $k:\R^n\to\R$ be smooth outside the
origin and positive homogeneous of degree one. Then $\graph
k\subset\R^{n+1}$ is a cone. The unique solution $U$ to \eqref{mcf}
with $U(\cdot,0)=k$ is homothetically expanding, which implies that
for any $t_1,\,t_2>0$, $\graph U(\cdot,t_1)$ and $\graph U(\cdot,t_2)$
differ only by a homothety. Hence $U$ fulfills
\begin{equation}\label{U scaling}  
U(x,t)=\sqrt{2nt}\,\,U\left(\frac x{\sqrt{2nt}}, \frac1{2n}\right)  
=\sqrt t\,\, U\left(\frac x{\sqrt t},1\right).  
\end{equation}  
We refer to \cite{EckerHuiskenInvent,StavrouSelfSim} for details.  
  
In this paper, we are concerned with solutions $u$ to \eqref{mcf} such  
that $u(\cdot,0)=u_0$ is \emph{close to $k$ at infinity,}  
\begin{equation}\label{close to k eq}  
\sup\limits_{\R^n\setminus B_r(0)}|u_0-k|\to0\quad\text{as}\quad  
r\to\infty.  
\end{equation}  
  
In this situation, we study stability of $U$ under mean curvature  
flow, see our main result, Theorem \ref{main thm}. It implies in  
particular the following  
\begin{theorem}\label{H pos thm}  
Let $k$, $U$, $u_0$ and $u$ be as above. Assume that $\graph k$ is 
contained in a half-space and has positive mean curvature outside the 
origin, $H[k]>0$. Then 
$$\sup\limits_{\R^n}|u(\cdot,t)-U(\cdot,t)|\to0\quad\text{as}\quad  
t\to\infty.$$  
\end{theorem}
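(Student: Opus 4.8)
The plan is to trap $u$ between a supersolution and a subsolution of \eqref{mcf} that both converge uniformly to $U$ as $t\to\infty$, and to conclude with the comparison principle for \eqref{mcf}. Set $M:=\sup_{\R^n}|u_0-k|$, finite by \eqref{close to k eq}, and $v:=U(\cdot,1)-k$. Since $\dot k=0\le\sqrt{1+|Dk|^2}\,H[k]$, the cone $k$ is a subsolution of \eqref{mcf}, so $U(\cdot,t)\ge k$ for all $t\ge0$ and $U$ is nondecreasing in $t$ (compare $U(\cdot,\cdot+h)$ with $U$); in particular $v\ge0$, and $v>0$ on $\R^n$ by the strong maximum principle together with $H[k]>0$, while $\sup_{\R^n}v<\infty$ by \cite{EckerHuiskenInvent,StavrouSelfSim}. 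By \eqref{U scaling}, for each $c\ge0$ the time--translate $U_c(x,t):=U(x,t+c)$ solves \eqref{mcf}, satisfies $U_c\ge U$, and has $U_c(\cdot,0)=k+\sqrt c\,v(\,\cdot/\sqrt c\,)$.

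\emph{Upper bound.} Given $\epsilon>0$, choose $R_\epsilon$ with $|u_0-k|\le\epsilon$ outside $B_{R_\epsilon}(0)$. On $\overline{B_{R_\epsilon}}$ one has $\sqrt c\,v(\,\cdot/\sqrt c\,)\ge\sqrt c\,\min_{\overline{B_{R_\epsilon}}}v\to\infty$ as $c\to\infty$ since $v>0$, so we may fix $c=c_\epsilon$ so large that $U_{c_\epsilon}(\cdot,0)+\epsilon\ge k+M\ge u_0$ on $\overline{B_{R_\epsilon}}$, while outside $B_{R_\epsilon}$ trivially $U_{c_\epsilon}(\cdot,0)+\epsilon\ge k+\epsilon\ge u_0$. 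Thus $U_{c_\epsilon}(\cdot,0)+\epsilon\ge u_0$ on $\R^n$, so $u\le U_{c_\epsilon}+\epsilon$ by comparison. Differentiating \eqref{U scaling} gives $\dot U(x,t)=\tfrac1{2\sqrt t}\bigl(v(y)-Dv(y)\cdot y\bigr)$ with $y=x/\sqrt t$, which is bounded by $C t^{-1/2}$; hence $0\le U_{c_\epsilon}(x,t)-U(x,t)=\int_0^{c_\epsilon}\dot U(x,t+s)\,ds\le C\,c_\epsilon\,t^{-1/2}$, so $\limsup_{t\to\infty}\sup_{\R^n}\bigl(u(\cdot,t)-U(\cdot,t)\bigr)\le\epsilon$ for every $\epsilon>0$, i.e.\ $\le0$.

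\emph{Lower bound.} This is the harder direction, and where the half-space hypothesis is used. Since $u_0$ may dip below $k$ on the compact set $B_{R_\epsilon}$, the obvious subsolution beneath $u_0$, namely the vertical translate $U(\cdot,t)-M$, keeps a fixed gap from $U$; instead one must start from a localized downward dent of $k$ and show that the dent heals. Fix $\epsilon>0$, choose $R_\epsilon$ as above, take $\chi\in C_c^\infty(\R^n)$ with $\chi\equiv1$ on $B_{R_\epsilon}$ and $0\le\chi\le1$, and set $w_0:=k-\epsilon-(M-\epsilon)\chi$. Then $w_0$ is Lipschitz, equals $k-\epsilon$ off $\operatorname{supp}\chi$, satisfies $w_0\le k-\epsilon=(U-\epsilon)(\cdot,0)$, and $w_0\le u_0$ on $\R^n$ (on $B_{R_\epsilon}$ because $w_0=k-M\le u_0$, elsewhere because $w_0\le k-\epsilon\le u_0$). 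As $\graph k$ lies in a half-space whose boundary hyperplane passes through the origin and, $\graph k$ being an entire graph, is not vertical, that hyperplane is $\graph\ell$ for a linear function $\ell$; fixing the orientation so that $k\ge\ell$ (the opposite case being analogous with $U\le\ell$) gives $w_0\ge k-M\ge\ell-M$. Let $W$ solve \eqref{mcf} with $W(\cdot,0)=w_0$; then $u\ge W$ by comparison, $\ell-M\le W(\cdot,t)\le U(\cdot,t)-\epsilon$ (comparison with the stationary plane $\ell-M$ and with the solution $U-\epsilon$), $\sup_{\R^n}|DW(\cdot,t)|$ is nonincreasing in $t$, and the interior estimates of \cite{EckerHuiskenInvent} then bound $W$ in $C^\infty$ on $\R^n\times[1,\infty)$. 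It remains to see that the nonnegative gap $g:=(U-\epsilon)-W$ tends to $0$ uniformly: $g$ satisfies a linear uniformly parabolic equation $\dot g=a^{ij}D_{ij}g+b^iD_ig$ with bounded coefficients (indeed $|b|=O(t^{-1/2})$) and $g(\cdot,0)=(M-\epsilon)\chi$ bounded with compact support, so a Gaussian upper bound for its fundamental solution --- equivalently, comparison of $g$ with an explicit decaying supersolution built from the plane $\ell$ --- gives $\sup_{\R^n}g(\cdot,t)=O(t^{-n/2})\to0$. Hence $u(\cdot,t)\ge W(\cdot,t)=U(\cdot,t)-\epsilon-g(\cdot,t)\ge U(\cdot,t)-\epsilon-o(1)$, so $\liminf_{t\to\infty}\inf_{\R^n}(u(\cdot,t)-U(\cdot,t))\ge-\epsilon$ for all $\epsilon>0$, i.e.\ $\ge0$. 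Combined with the upper bound this yields $\sup_{\R^n}|u(\cdot,t)-U(\cdot,t)|\to0$.

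The main obstacle is precisely this lower barrier --- producing something that starts below $u_0$ yet relaxes back to $U$ --- together with making the comparison principle and the decay of $g$ rigorous on the noncompact domain $\R^n$; it is here that the half-space hypothesis does its work, supplying the static planar barrier that controls the auxiliary flow near infinity, while $H[k]>0$ enters through $U\ge k$ and the positivity of $v$ used in the upper barrier.
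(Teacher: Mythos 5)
Your upper barrier (the time translate $U(\cdot,\cdot+c)$ together with $|\dot U|\le Ct^{-1/2}$) is essentially the paper's own argument (Lemma \ref{U converges lem} and the first part of the proof of Theorem \ref{main thm}); note, though, that the boundedness of $v(y)-Dv(y)\cdot y$ is not automatic from gradient bounds alone --- it is equivalent to $|H[U(\cdot,1)]|\le c$, which the paper gets from \cite{EckerHuiskenInvent,StavrouSelfSim}. The more serious problem is at the very start: you derive $U\ge k$, the monotonicity of $U$ in $t$, and $v>0$ from the one-line claim that $k$ is a subsolution of \eqref{mcf} because $H[k]\ge0$. This is exactly the point the paper flags as nontrivial and devotes Lemma \ref{U above k} to. The cone is singular at the vertex, and mean convexity away from the origin does not make $k$ a viscosity subsolution there: if some hyperplane $\graph l_0$ lies above $\graph k$ (and when $H[k]>0$ away from $0$ such a hyperplane can only touch at the vertex, so it lies strictly above with linear separation), then every smooth $\phi$ with $D\phi(0)=Dl_0$ and an arbitrary negative transversal Hessian touches $k$ from above at the origin while $H[\phi](0)<0$, so the subsolution test fails at $0$. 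Moreover the Ecker--Huisken construction mollifies the initial data, so ``$H\ge0$ is preserved'' is not free on a noncompact graph. The paper proves $U\ge k$ using the half-space condition through Appendix \ref{Rn stable} (to control $U(0,t)$), the asymptotics of $U-k$ at spatial infinity, and the strong maximum principle; your argument never uses the half-space hypothesis for this step, even though without it the conclusion can fail (see the counterexample discussed after Definition \ref{K def}). These facts are true under your hypotheses, but they need that argument or a citation, not the comparison you invoke.

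The second genuine gap is the decay of $g:=(U-\epsilon)-W$, on which your entire lower bound rests. You assert that a nonnegative solution of $\dot g=a^{ij}D_{ij}g+b^iD_ig$ with bounded uniformly elliptic coefficients, drift $O(t^{-1/2})$ and compactly supported initial data satisfies $\sup_{\R^n}g(\cdot,t)\to0$ ``by a Gaussian upper bound for its fundamental solution''. That is not off-the-shelf: Aronson-type bounds are for divergence-form operators, while the difference of two graphical mean curvature flows satisfies a non-divergence-form equation, and uniform-in-time on-diagonal decay then needs quantitative control that must actually be proved. Worse, a drift of size $Ct^{-1/2}$ is exactly scale-critical: an inward drift of size $O(1)$ already destroys sup-decay (solutions converge to the mass of the invariant measure), so the decay has to be extracted from the specific constant, not waved through; and since $b$ involves $D^2U\sim t^{-1/2}$ it blows up as $t\to0$, forcing a restart at $t=1$, where $g(\cdot,1)$ is no longer compactly supported and its spatial decay must first be established. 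Your fallback, ``comparison with a decaying supersolution built from the plane $\ell$'', does not obviously apply either: Huisken's Gaussian argument (Theorem \ref{half space avoid thm}) compares a single graphical solution with a static hyperplane, whereas $g$ is the gap between two solutions asymptotic to the non-flat cone $k$, and no analogous supersolution is exhibited. The paper avoids this analytic problem altogether: for Theorem \ref{H pos thm} it uses the static strictly mean convex barrier $k-|x|^{-\alpha}$ below the cone (Section \ref{barr sec}, Lemmas \ref{barrier} and \ref{t delta ex lem}) together with the fact that $U(\cdot,t)-m$ eventually exceeds $k$ on the relevant compact set, so that $u(\cdot,t_\delta)\ge k-\delta$ follows without any heat-kernel estimate. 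Until the dent-healing estimate for $g$ is actually proved, your lower bound is incomplete.
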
  
  
The results of this paper hold for the following class of mean convex
cones:
\begin{definition}\label{K def}  
A function $k:\R^n\rightarrow \R$ is said to be of class $\K$ if the
following conditions are fulfilled:
\begin{enumerate} [(i)]
\item \label{k i} $k$ is positive homogeneous of degree one;
\item \label{k ii} $k$ is smooth outside the origin;
\item \label{k iii} $k$ has non-negative mean curvature $H[k]$ outside
  the origin;
\item \label{k iv} there exists a linear function $l$ such that $k\ge
  l$;
\item \label{k v} \label{stability condition} if $n\ge3$, at points
  $p=(\hat{p},p^{n+1})\neq0$ in $\graph k$ where $H[k](p)=0$, we
  require that the second fundamental form $A$ of $\graph k$ fulfills
  \[|A|^2(p)<\left(\tfrac{n-2}2\right)^2|p|^{-2}.\] 
\end{enumerate}
We will refer to both the function $k$ and the hypersurface $\graph k$
as cones.
\end{definition}

Our main theorem is   
\begin{theorem}\label{main thm}
Let $U$ be the homothetically expanding solution to Equation
\eqref{mcf} with $U(\cdot,0)=k$, where $k$ is a cone of class $\K$.
Suppose that $u\in C^\infty_{loc}\left(\R^n\times(0,\infty)\right)
\cap C^0_{loc}\left(\R^n\times[0,\infty)\right)$ solves \eqref{mcf}
with initial data $u_0\in C^0_{loc}\left(\R^n\right)$ approaching $k$
at infinity by fulfilling \eqref{close to k eq}.  Then
$$u(\cdot,t)-U(\cdot,t)\to0 \quad\text{as}\quad t\to\infty,$$  
uniformly in $C^k\left(\R^n\right)$ for every $k\in\N$.     
\end{theorem}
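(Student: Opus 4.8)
The plan is to sandwich $u$ between two barriers that are modifications of the homothetically expanding solution $U$, and to show these barriers converge uniformly (together with all derivatives) to $U$ as $t\to\infty$. The first task is to build an \emph{upper barrier}. Because $k$ is of class $\K$, in particular $H[k]\ge0$, the homothetic solution $U$ is itself a subsolution-type object; more precisely one looks for a barrier of the form $U(x,t+\tau)+\varepsilon$ or, better, a time-dependent vertical translate $U(x,t+\tau(t))$ with $\tau(t)$ chosen so that the resulting function is a supersolution of \eqref{mcf}. Using the scaling identity \eqref{U scaling}, $U(x,t+\sigma)=\sqrt{t+\sigma}\,U(x/\sqrt{t+\sigma},1)$, so at time $0$ the barrier sits above $u_0$ provided the constant and the shift $\sigma$ are chosen using hypothesis \eqref{close to k eq}: outside a large ball $u_0-k$ is small, and inside the ball the linear lower bound \eqref{k iv} together with the homogeneity of $k$ controls everything after paying a fixed additive constant. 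The lower barrier is constructed symmetrically, using $k\ge l$ and the fact that the plane $\graph l$ evolves by translation, hence bounds $u$ from below by the comparison principle.

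The key technical step — and the one I expect to be the main obstacle — is verifying that these translated/shifted copies of $U$ are genuine super- and subsolutions, and here conditions \eqref{k iii} and \eqref{stability condition} of Definition \ref{K def} enter. Differentiating the homothetic relation, the function $w:=\langle x,DU\rangle-U$ (the "support-function" quantity, essentially $-\partial_\sigma U(x,t+\sigma)|_{\sigma=0}$ up to a factor) satisfies the linearized mean curvature flow equation, a drift-diffusion equation on $\graph U$ with zeroth-order coefficient $|A|^2$. The stability inequality $|A|^2<\left(\tfrac{n-2}{2}\right)^2|p|^{-2}$ at the degenerate points is exactly what is needed to run a Hardy-type inequality / maximum-principle argument guaranteeing that $w$ has a sign, equivalently that the homothetic family is monotone in the homothety parameter; this monotonicity is what makes the shifted barriers order-comparable with $U$ and forces the gap between upper and lower barrier to shrink. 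For $n=1,2$ the condition is vacuous and one argues directly from $H[k]\ge0$. I would isolate this as a separate lemma (monotonicity of $U$ under homothety, or equivalently a gradient estimate / a bound $w\ge0$).

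With the barriers in place, the $C^0$ convergence $\sup_{\R^n}|u(\cdot,t)-U(\cdot,t)|\to0$ follows: the comparison principle gives $U^- \le u\le U^+$ for all time, and the explicit form of $U^\pm$ via \eqref{U scaling} shows $\sup|U^+-U^-|\to0$ as $t\to\infty$ because the additive constants are absorbed into a homothety parameter that becomes negligible relative to $\sqrt{t}$ — one rescales $\tilde u(y,s)=\frac{1}{\sqrt t}u(\sqrt t\,y,ts)$ and observes the barriers pinch at $s=1$. This handles Theorem \ref{H pos thm} and the $C^0$ part of Theorem \ref{main thm}.

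Finally, to upgrade to $C^k_{loc}$ (in fact uniform $C^k$) convergence, I would invoke interior estimates for the quasilinear parabolic equation \eqref{mcf}: once $u-U$ is uniformly small in $C^0$ on $\R^n\times[T,\infty)$ and the gradients are controlled (the barriers also control $|Du|$ at infinity since they are asymptotic to the fixed cone, and Ecker–Huisken-type interior gradient estimates propagate this inward), standard Krylov–Safonov and Schauder bootstrapping on parabolic cylinders of fixed size give uniform bounds on all higher derivatives of $u$ on $\R^n\times[T+1,\infty)$; interpolating these bounds against the $C^0$ decay yields $\|u(\cdot,t)-U(\cdot,t)\|_{C^k(\R^n)}\to0$ for every $k$. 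The only subtlety here is uniformity in the spatial variable, which is why it is important that the barrier construction is translation-covariant at infinity, so the local estimates have constants independent of the base point.
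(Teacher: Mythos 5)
Your upper barrier ($U(\cdot,T)+\epsilon\ge u_0$ via \eqref{U scaling} and \eqref{close to k eq}) and the final $C^0\to C^k$ upgrade by Ecker--Huisken interior estimates plus interpolation match the actual argument. The genuine gap is on the lower side. Your lower barrier is a hyperplane $\graph l$ (or $U$ itself) dropped by a \emph{fixed} additive constant large enough to get under $u_0$ on the compact set where $u_0$ may dip far below $k$. Such a barrier never pinches: the constant is not ``absorbed into a homothety parameter'' --- vertical shifts are preserved exactly by \eqref{mcf}, and what decays is only $U(\cdot,t+T)-U(\cdot,t)$, not the shift. So your sandwich gives $U(\cdot,t)-C\le u\le U(\cdot,t+T)+\epsilon$ with $C$ fixed, which does not yield convergence. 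The heart of the theorem is precisely the missing recovery statement: for every $\delta>0$ there is a time $t_\delta$ with $u(\cdot,t_\delta)\ge k-\delta$; only after that can one restart the flow and run the two-sided comparison with $\epsilon$ arbitrary. In the paper this is done, for $n\ge3$, by constructing a smooth mean-convex hypersurface $\graph b$ strictly below the cone and asymptotic to it (obtained by flowing $\graph k$ for unit time with normal speed $-|X|^{-(n-2)/2}$), and then using $\max\{U(x,t)-m,\;b^\lambda(x)-\delta/2\}$ as a viscosity subsolution; for $n=1,2$ (and for linear $k$) it is done via supporting hyperplanes together with a separate stability theorem for hyperplanes proved with Huisken's heat-kernel supersolution $\Phi(X,t)$ on the evolving graph. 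Your proposal contains no mechanism that forces the solution back up towards $k$ from below.

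Relatedly, you misplace the role of condition \eqref{stability condition} of Definition \ref{K def}. Time-shifted copies $U(\cdot,\cdot+\tau)$ are exact solutions, so no super/subsolution verification is needed there, and the monotonicity $U(\cdot,t)\ge k$ (equivalently the sign of your quantity $w$, i.e.\ $H\ge0$ along the expander) is established in the paper from conditions \eqref{k i}--\eqref{k iv} alone, by a maximum-principle argument; your proposed Hardy-inequality argument for $w$ is not where \eqref{stability condition} is needed. The condition $|A|^2<\left(\tfrac{n-2}2\right)^2|p|^{-2}$ at the points where $H[k]=0$ enters instead in the barrier construction just described: with the speed exponent chosen as $\alpha=\tfrac{n-2}4$, the evolution inequality for $H$ under the deformation has the form $\tfrac{d}{dt}H\ge(-F)\left(-|A|^2+\left(\tfrac{n-2}2\right)^2|X|^{-2}-2\alpha|X|^{-2}\langle X,\nu\rangle H\right)$, and the Hardy-type threshold $\left(\tfrac{n-2}2\right)^2$ is exactly what makes $H[b]>0$ near the degenerate set. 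So your intuition that \eqref{stability condition} is a linearized-stability condition of Hardy type is correct, but as deployed in your proposal it does not close the actual gap, namely the construction of a lower barrier that improves in time.
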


The cones $k$ considered in Theorem \ref{main thm} are such that the
homothetically expanding solutions $U$ studied in
\cite{EckerHuiskenInvent,StavrouSelfSim} fulfill $U(\cdot,t)\ge k$ for
every $t\ge0$. It is clear that this requires $H[k]\ge0$ outside the
origin. We also want to impose a non-negativity condition on $H[k]$ at
the origin. The definition of viscosity solutions suggests a
requirement that $\graph k$ is contained in a half-space; this is
implied by Condition \eqref{k iv} in Definition \ref{K def}. Note that
this condition may be violated even if $H[k]\ge0$ outside the
origin. A counterexample is the higher dimensional analogue of what we
try to illustrate in the picture. The black region is given by
$$\left\{(x,z)\in\R^n\times\R:z>k(x)\right\}\cap \Sphere^n.$$ It is a
starshaped subset of the sphere with respect to the north pole as its
boundary is given by $\graph k$, intersected with $\Sphere^n$. In
higher dimensions, the black region is constructed as follows. Attach
sets of the form $B_r\times[0,1]$ to a geodesic ball around the
origin.  We smooth the resulting set, especially near $B_r\times\{0\}$
and $B_r\times\{1\}$.  As there are hypersurfaces of positive mean
curvature that contain ``necks'' \cite{HuiskenSinestrari3}, we can
ensure that the boundary of the constructed set has positive mean
curvature. If we attach enough sets of the form $B_r\times[0,1]$ that
extend over the equator, it is easy to see that the resulting set is
not contained in a half-space any more. Hence the corresponding
assumption on $k$ is not redundant.
\begin{figure}[htb] 
\epsfig{file=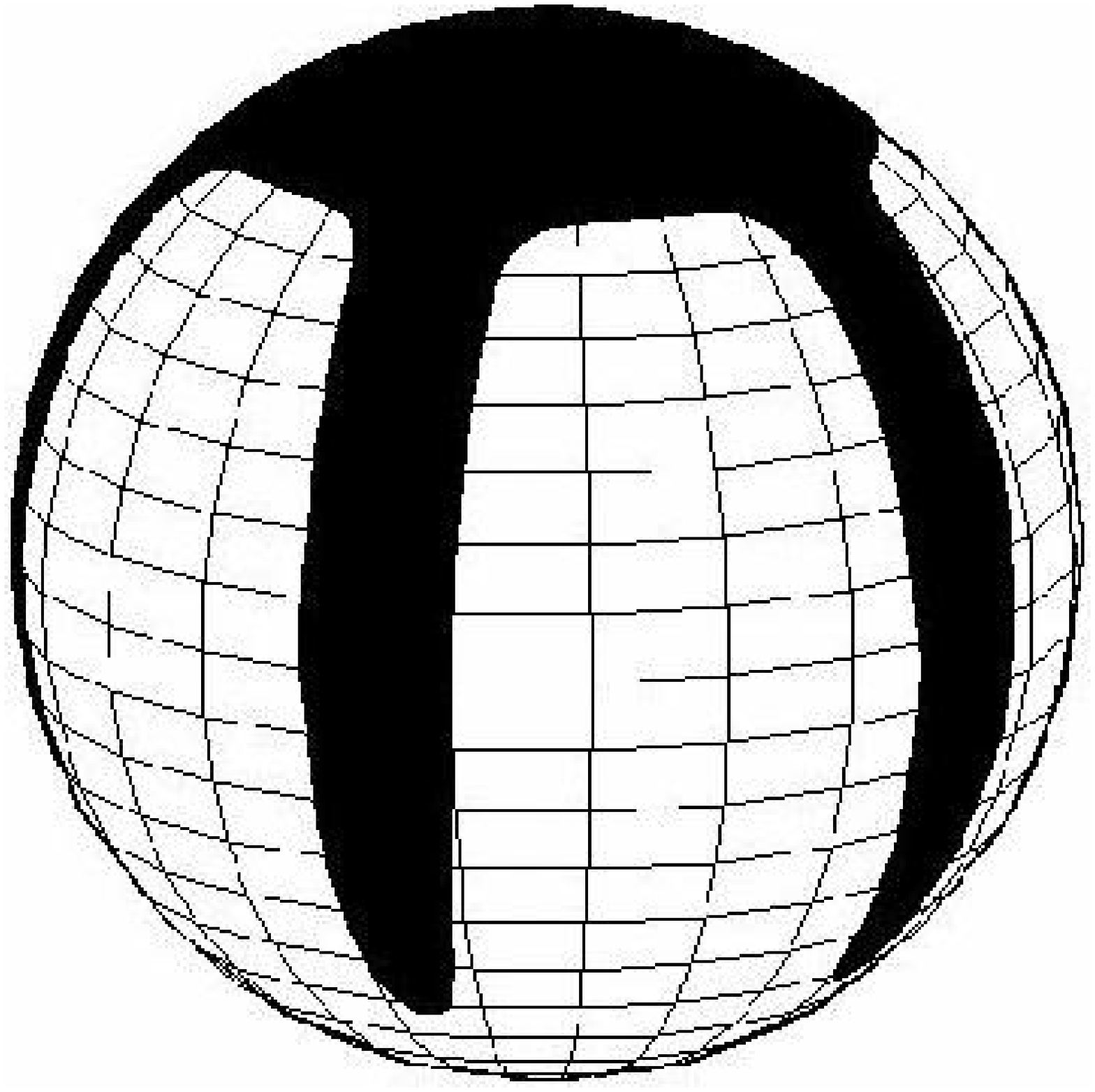,   width=0.4\textwidth} 
%\caption{} 
\label{starfish wrapped around a ball} 
\end{figure} 
 
Homogeneous minimal cones fulfill a linearized stability condition if 
and only if $|A|^2(p)\le\left(\frac{n-2}2\right)^2|p|^{-2}$, see 
\cite[Example 4.7]{CaffarelliHardtSimonSing} for details. Hence it is 
not surprising that we have to impose such a bound at those points, 
where the mean curvature $H$ vanishes. 

If $k$ is convex, Condition \eqref{stability condition} in Definition
\ref{K def} always holds.
 
Our proof of Theorem \ref{main thm} also works if $k\in C^2_{loc}$ 
away from the origin. We conjecture that the result extends also to 
continuous cones, but expect that this will be quite technical. 
 
The existence of homothetically expanding solutions to mean curvature
flow starting from a cone was first examined by K. Ecker and
G. Huisken in \cite{EckerHuiskenInvent} and further investigated by
N. Stavrou \cite{StavrouSelfSim}. For graphical initial data, the
existence of solutions to \eqref{mcf} is studied by T. Colding and
W. Minicozzi and others in
\cite{EckerHuiskenInvent,JCPhD,ColdingMinicozzi}. After appropriate
rescaling, solutions which deviate initially sublinearly from a cone,
converge for large times to the homothetically expanding
solution. This is proved in
\cite{EckerHuiskenInvent,StavrouSelfSim}. The present paper addresses
the corresponding convergence result without rescaling. Such questions
have also been addressed in
\cite{OSPierreCrelle,OSAlbert,JCOSFSMCFStability,OSFSMSStabilityRicci}.
 
As in those papers addressing stability without rescaling, we have to
impose a decay condition, in our case \eqref{close to k
eq}. Boundedness of the perturbation does at most imply subsequential
convergence to a homothetically expanding solution as appropriate
initial oscillations of $u_0-k$ in space may yield oscillations of
$u(0,t)-U(0,t)$ in time.  Note however, that we do not have to impose
a decay rate in \eqref{close to k eq}. Moreover, using the clearing
out lemma we can weaken \eqref{close to k eq} by allowing small
additional BV-perturbations of $k_0$. Compare this with \cite[Theorem
1.6]{OSFSMSStabilityRicci}.
 
The idea of the proof of Theorem \ref{main thm} and the organization 
of the rest of the paper are as follows. Here we will only sketch the 
proof in the case that $U(\cdot,t)>k$ for all $t>0$. Otherwise, $k$ is 
 linear. We address this special situation in Appendix \ref{Rn 
stable}.\par If $u_0\ge k$, we can use the homothetically expanding 
solutions $U$ as barriers.  For arbitrary $\epsilon>0$ and $T>0$ 
chosen appropriately, we have 
$$U(x,t)-\epsilon\le u(x,t)\le U(x,t+T)+\epsilon$$ for all $(x,t)$. As  
$U(x,t+T)-U(x,t)$ converges to zero as $t\to\infty$ and $\epsilon>0$  
is arbitrary, convergence follows. We will discuss that in detail in  
Section \ref{U barrier sec}.\par If $u_0<k$ somewhere, we construct  
barriers that show that for every $\delta>0$, there exists  
$t_\delta>0$ such that $u(\cdot,t_\delta)\ge k-\delta$. Then the above  
argument can be applied with $\epsilon=2\delta$ and  
$$U(x,t)-\epsilon\le u(x,t+t_\delta)\le U(x,t+T)+\epsilon.$$ The 
barrier construction and details for that part are to be found in 
Section \ref{barr sec}. In the case of convex cones, we can use 
hyperplanes as considered in Appendix \ref{Rn stable} to ensure the 
existence of such a time $t_\delta$ for every $\delta>0$. The rest of 
the argument is similar to the one given above. In Section \ref{BV 
sec} we explain how to weaken the decay condition to allow additional 
decaying perturbations in $BV_{loc}$. \par In the appendices, we 
study stability of hyperplanes and the uniform convergence for a 
family of perturbations. 
 
We want to thank Ben Andrews, Gerhard Huisken and Felix Schulze for
discussions. We gratefully acknowledge support from the Alexander von
Humboldt foundation, the Australian National University, the
Australian Research Council and the Deutsche Forschungsgemeinschaft.
  
\section{One-Sided Perturbations}  
\label{U barrier sec}  
 
Let us first show that the conditions imposed on $k$ ensure that the
homothetic solution always lies above the cone.
\begin{lemma}\label{U above k} 
Assume that $k:\R^n\to\R$ satisfies conditions \eqref{k i}--\eqref{k
iv} of Definition \ref{K def}.  Let $U$ be the homothetically
expanding solution to \eqref{mcf} with $U(\cdot,0)=k$. Then
$U(\cdot,t)\ge k$ for all $t\ge0$.
\end{lemma}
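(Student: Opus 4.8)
The plan is to use the comparison principle for mean curvature flow together with the scaling relation \eqref{U scaling} to squeeze $U(\cdot,t)$ between two time-translates of itself, and then let the translation parameter tend to zero. The starting observation is that condition \eqref{k iv} gives a linear function $l$ with $k\ge l$, and since hyperplanes are stationary solutions of \eqref{mcf}, the comparison principle yields $U(\cdot,t)\ge l$ for all $t\ge 0$. In particular $U$ is locally bounded from below, which is what one needs to run comparison arguments on all of $\R^n$ against the homothetic solutions themselves.

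The key step is to compare $U$ with its own forward translate. Fix $T>0$ and consider the two solutions $U(\cdot,t)$ and $U(\cdot,t+T)$ of \eqref{mcf}. At time $t=0$ we must check $U(\cdot,T)\ge k = U(\cdot,0)$. This is precisely the statement of the lemma specialised to the initial time, so a direct comparison at $t=0$ is circular; instead I would exploit homogeneity. By \eqref{U scaling}, $U(x,T)=\sqrt{T}\,U(x/\sqrt{T},1)$, and because $k$ is positive homogeneous of degree one, $k(x)=\sqrt{T}\,k(x/\sqrt T)$. So it suffices to show $U(\cdot,1)\ge k$ on $\R^n$, i.e. to prove the lemma at one single positive time. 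To get that, use that $k$ has $H[k]\ge 0$ outside the origin (condition \eqref{k iii}): the cone $\graph k$ is itself a subsolution (it moves with normal speed $\sqrt{1+|Dk|^2}\,H[k]\ge 0 \ge$ its own time-derivative, which is zero), so away from the origin $k$ is a stationary subsolution to \eqref{mcf}. The remaining difficulty is the behaviour at the origin, where $k$ is merely Lipschitz and $H[k]$ is not defined.

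To handle the origin I would first establish the inequality on the slightly smaller set where one already has smoothness. Pick $\epsilon>0$. On $\R^n$, $U(\cdot,t)$ solves \eqref{mcf} and is continuous up to $t=0$ with $U(\cdot,0)=k$; I compare it on the region $t\in[0,S]$ with the subsolution $k$ (constant in time), using as competitor domain $\R^n\setminus B_\rho(0)$ for small $\rho>0$. On the lateral boundary $\partial B_\rho(0)\times[0,S]$ one has, by continuity of $U$ and compactness, $U\ge k-\epsilon$ provided $S$ is small depending on $\rho,\epsilon$; on the bottom $U=k$. Since $k-\epsilon$ is still a (stationary) subsolution on $\R^n\setminus B_\rho(0)$ and $U$ grows at most linearly (it lies above a hyperplane and, by the homothetic structure, below a fixed cone), the comparison principle on this unbounded-but-exterior domain gives $U(\cdot,t)\ge k-\epsilon$ on $\R^n\setminus B_\rho(0)$ for $t\le S$. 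Now send $\rho\to 0$, then $\epsilon\to 0$, to obtain $U(\cdot,t)\ge k$ for all small $t\ge 0$. Finally, to propagate this to all $t$: once $U(\cdot,t_0)\ge k$ for some $t_0>0$ with $t_0$ small, rescale via \eqref{U scaling} — the inequality $U(\cdot,t_0)\ge k$ is scale-invariant in exactly the way that makes it equivalent to $U(\cdot,t)\ge k$ for every $t>0$. Together with $U(\cdot,0)=k$ this proves the lemma.

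I expect the main obstacle to be the comparison argument on the unbounded exterior domain $\R^n\setminus B_\rho(0)$: one must justify applying the maximum principle there, which requires the linear lower bound from \eqref{k iv} (to control $U$ from below and rule out loss of the infimum at spatial infinity) and the upper linear growth of $U$ (from the homothetic scaling). The treatment near the origin, where $k$ fails to be $C^2$, is the delicate point, and the device of first working on $\R^n\setminus B_\rho$ with a small $\epsilon$-shift and then letting $\rho\to0$ is the mechanism that circumvents the lack of a classical mean curvature there; one should double-check that no extra hypothesis beyond \eqref{k i}--\eqref{k iv} is needed for this, consistent with the statement of the lemma.
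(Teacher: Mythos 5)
There is a genuine gap in the passage from the exterior-domain comparison to the conclusion. Your lateral boundary estimate on $\partial B_\rho(0)\times[0,S]$ comes from continuity of $U$ at $t=0$, so $S=S(\epsilon)$ (it may also shrink with $\epsilon$). After letting $\rho\to0$ you therefore only obtain: for every $\epsilon>0$, $U(\cdot,t)\ge k-\epsilon$ for $t\in[0,S(\epsilon)]$. Since the time window may degenerate as $\epsilon\to0$, this does \emph{not} yield $U(\cdot,t_0)\ge k$ at any single positive time $t_0$, so the final scaling step via \eqref{U scaling} has nothing to propagate. Moreover, scaling cannot repair this: the statement ``$U\ge k-\epsilon$ on $[0,S]$'' rescales to ``$U\ge k-\lambda\epsilon$ on $[0,\lambda^2 S]$'', so reaching a fixed time $t^*$ costs a factor $\lambda=\sqrt{t^*/S(\epsilon)}$ in the error; with the natural modulus of continuity $|U(\cdot,t)-k|\le C\sqrt t$ one has $S(\epsilon)\sim\epsilon^2$, and the rescaled error is $O\bigl(\sqrt{t^*}\bigr)$, i.e.\ nothing. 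A secondary point: the comparison principle you invoke is for an unbounded domain \emph{with lateral boundary}; the results of Barles--Biton--Bourgoing--Ley used in this paper are for the Cauchy problem on $\R^n$, so some device (as in Lemma \ref{t delta ex lem}, where a global viscosity subsolution is built by taking a maximum) is needed to make that step rigorous.

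The repair is close to what you already wrote down in your first paragraph: by homogeneity of $k$, the linear function $l$ in condition \eqref{k iv} may be taken to satisfy $l(0)=0$, and $U(\cdot,t)\ge l$ for all $t$. Hence on $|x|=\rho$ one has $U(x,t)\ge l(x)\ge -C\rho\ge k(x)-\epsilon$ for all $t\ge0$ once $\rho$ is small depending only on $\epsilon$; with this time-uniform lateral bound the exterior comparison gives $U\ge k-\epsilon$ for \emph{all} $t$, and letting $\rho\to0$, $\epsilon\to0$ finishes the proof without any scaling step. This differs from the paper's route, which argues by contradiction at the first time the difference $U-k$ reaches $-\epsilon$: the infimum is attained because $|U(\cdot,t)-k|\to0$ near spatial infinity (comparison with spheres), it cannot sit at the origin because $U(0,t)\ge k(0)=0$ (indeed $>0$) by the half-space result of Appendix \ref{Rn stable}, and it cannot sit away from the origin by the strong maximum principle since $H[k]\ge0$ there. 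As submitted, however, your argument does not establish the lemma.
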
 
\begin{proof} 
Mean curvature flow of smooth compact manifolds preserves the
condition $H\ge0$. Here, however, we consider a noncompact manifold
and the existence proof in \cite{EckerHuiskenInvent} involves a
mollification of the initial data. This mollification, however, might
destroy the condition $H\ge0$. Hence the result does not seem to be
trivial.

According to Appendix \ref{Rn stable}, we have $U(0,t)\ge k(0)=0$ for
all $t\ge0$. We may assume that $k$ is not a linear function for
otherwise $U(\cdot,t)=k$ for all $t$. As $U(\cdot,t)$ is smooth for
$t>0$ but $k$ is singular at the origin, we deduce that $U(0,t)>0$ and
$H[U](0,t)>0$ for $t>0$.  Both inequalities extend to a possibly
time-dependent neighborhood of the origin. Near spatial infinity,
comparison with spheres shows that $\sup\limits_{\R^n\setminus
B_r(0)}|U(\cdot,t)-k|\to0$ uniformly as $r\to\infty$ for $t$ in a
bounded time interval. Fix $\epsilon>0$ and assume that there exists
$(x_0,t_0)$ such that $U(x_0,t_0)-k(x_0)\le-\epsilon$. The behavior of
$U$ at spatial infinity ensures that a negative infimum of
$U(\cdot,t)-k$ is attained. Hence we may assume that $(x_0,t_0)$ is
such that $t_0>0$ is minimal with $U(x_0,t_0)-k(x_0)\le-\epsilon$. The
considerations above imply that $|x_0|>0$. As $H[k](x_0)\ge0$, this
contradicts the strong maximum principle applied to $U$ and $k$. The
claim follows.
\end{proof} 
  
\begin{corollary}\label{U strict above k} 
Let $k:\R^n\to\R$ be as in Lemma \ref{U above k}. If $k$ is not a
 linear function, then $U(\cdot,t)>k$ for $t>0$. 
\end{corollary}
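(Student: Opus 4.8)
The plan is to upgrade the weak inequality $U(\cdot,t)\ge k$ from Lemma \ref{U above k} to a strict one for positive times, using the strong maximum principle. First I would exploit that $k$ is not linear: then $k$ fails to be smooth at the origin (a homogeneous degree-one function that is smooth at $0$ must be linear), so $\graph k$ has a genuine singularity there, whereas $\graph U(\cdot,t)$ is smooth for $t>0$. Combined with $U(\cdot,t)\ge k$, this already forces $U(0,t)>k(0)=0$ for every $t>0$ — if $U(0,t_0)=0$ for some $t_0>0$, then $U(\cdot,t_0)$ would be a smooth function lying above the singular cone $k$ and touching it at the origin, which is incompatible with smoothness (the graph of $U$ would have to have a tangent plane at the touching point that lies below $\graph k$ near $0$, contradicting homogeneity of $k$). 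This is essentially the argument already invoked inside the proof of Lemma \ref{U above k}.

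Next, away from the origin both $U(\cdot,t)$ (for $t>0$) and $k$ are smooth solutions, respectively subsolution, of the quasilinear parabolic equation \eqref{mcf}: indeed $\dot k=0\le\sqrt{1+|Dk|^2}\,H[k]$ by Condition \eqref{k iii}. The difference $w:=U-k$ is non-negative by Lemma \ref{U above k} and satisfies, on the open set $(\R^n\setminus\{0\})\times(0,\infty)$, a linear parabolic equation with locally bounded coefficients obtained in the standard way by writing the difference of the two graphical mean curvature operators as an integral over the segment joining $Dk$ and $DU$ (so the coefficients depend on $x,t$ through $u$, $k$ and their first and second derivatives, which are locally bounded for $t>0$ and $|x|>0$). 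The strong maximum principle for $w$ then dichotomizes: on each connected component of $(\R^n\setminus\{0\})\times(0,t_1]$, either $w>0$ throughout the interior, or $w\equiv0$ on the parabolic interior up to time $t_1$.

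To finish, I would rule out the possibility $w\equiv0$ near the origin. Suppose $w(x_0,t_0)=0$ for some $x_0\neq0$ and $t_0>0$. Applying the strong maximum principle on a small parabolic cylinder contained in $(\R^n\setminus\{0\})\times(0,\infty)$ and connected to earlier times forces $w\equiv0$ on a backward-in-time neighborhood and, by connecting points through the punctured space (which is connected for $n\ge2$; the case $n=1$ is trivial since then every cone is linear), on an entire punctured ball $B_\rho(0)\setminus\{0\}$ at some time $t'\in(0,t_0)$. Thus $U(\cdot,t')=k$ on $B_\rho(0)\setminus\{0\}$, hence by continuity on $B_\rho(0)$; but then $U(\cdot,t')$ is smooth and agrees with the non-smooth function $k$ near the origin, a contradiction. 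Therefore $w>0$ on all of $(\R^n\setminus\{0\})\times(0,\infty)$, and together with $U(0,t)>0=k(0)$ this gives $U(\cdot,t)>k$ for every $t>0$, as claimed. The main obstacle is the bookkeeping near the origin — ensuring the strong maximum principle can be applied on a domain avoiding the singularity while still propagating a zero of $w$ into a neighborhood of $0$ where the contradiction with smoothness bites; the homogeneity of $k$ is what makes that contradiction clean.
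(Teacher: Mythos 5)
Your proposal takes essentially the same route as the paper: the paper's proof of Corollary \ref{U strict above k} likewise considers the nonnegative difference $w=U-k$, takes the strict positivity $U(0,t)>0$ for $t>0$ from the proof of Lemma \ref{U above k}, and concludes $w>0$ by the strong maximum principle; you merely supply the linearization, the restriction to the punctured space, and the connectivity bookkeeping that the paper leaves implicit, and that part of your argument is fine. Two small corrections. First, your dismissal of $n=1$ rests on a false claim: a positive homogeneous degree-one function on $\R$ need not be linear (for instance $k(x)=|x|$ satisfies \eqref{k i}--\eqref{k iv}), and $\R\setminus\{0\}$ is disconnected, so the connectivity step is unavailable there; the case is still immediate, since $w\equiv0$ on one half-line up to some time $t'>0$ would give $U(0,t')=k(0)=0$ by continuity, contradicting $U(0,t')>0$. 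Second, in your parenthetical justification of $U(0,t)>0$ the geometry is stated backwards: if the smooth function $U(\cdot,t_0)\ge k$ touched $k$ at the vertex, its tangent plane $L$ at the origin would lie \emph{above} $\graph k$ (homogeneity gives $L\ge k$), and homogeneity alone yields no contradiction; one also needs Condition \eqref{k iv}, since with $k\ge l$ for a linear (after passing to its homogeneous part) function $l$ one gets $l\le k\le L$, hence $l=L$ and $k$ linear, which is the desired contradiction. As you, like the paper, ultimately defer this point to the proof of Lemma \ref{U above k}, it does not affect the structure of your argument.
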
 
\begin{proof} 
According to the proof of Lemma \ref{U above k}, we have
$U(\cdot,t)\ge0$ and $U(0,t)>0$ for $t>0$. Hence the strong maximum
principle implies that $U(\cdot,t)>0$ for $t>0$. The claim follows.
\end{proof} 

In fact $\dot U>0$, or equivalently $H>0$, for $t>0$. This follows by
adapting techniques of B. White \cite{WhiteSize}. We do not need this
result for proving our Main Theorem \ref{main thm}. Therefore we will
only sketch the proof.
\begin{lemma}
Let $U$ be as in Lemma \ref{U strict above k}. Then $\dot
U(\cdot,t)>0$ for $t>0$.
\end{lemma}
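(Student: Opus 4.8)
The plan is to establish $\dot U>0$ by a maximum principle argument applied to the evolution equation satisfied by $\dot U$ (equivalently, by the speed $v:=\sqrt{1+|Du|^2}\,H[U]$ of the graphical flow), combined with the homothetic structure \eqref{U scaling} which reduces everything to a single time slice. Recall that for graphical mean curvature flow the quantity $\dot u$ satisfies a linear parabolic equation of the form $\frac{d}{dt}\dot u = a^{ij}D_{ij}\dot u + b^i D_i\dot u$ with $a^{ij} = \delta^{ij} - \frac{D_iu\,D_ju}{1+|Du|^2}$ uniformly elliptic on compact sets; so on any compact region $\dot U$ cannot attain a nonpositive interior spatial minimum unless it is locally constant. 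The difficulty, as with Lemma \ref{U above k}, is that $\R^n$ is noncompact and $U$ is only $C^0$ up to $t=0$ with a singularity of $k$ at the origin, so one cannot simply invoke the strong maximum principle globally; the control at spatial infinity and near $t=0$ must be supplied by hand.

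First I would record the evolution equation for $\dot U$ and note that, by Corollary \ref{U strict above k} together with the fact that $k$ is homogeneous of degree one and singular at the origin while $U(\cdot,t)$ is smooth for $t>0$, we already know $U(0,t)>0$; differentiating the homothety relation \eqref{U scaling} in $t$ at $x=0$ gives $\dot U(0,t) = \frac{1}{2t}U(0,t) > 0$ for $t>0$, so the speed is strictly positive at the tip. Next I would argue that it suffices to prove $\dot U(\cdot,1)>0$ on all of $\R^n$: by \eqref{U scaling}, $U(x,t)=\sqrt t\,U(x/\sqrt t,1)$, hence $\dot U(x,t) = \frac1{2\sqrt t}\bigl(U(x/\sqrt t,1) - \tfrac{x}{\sqrt t}\cdot DU(x/\sqrt t,1)\bigr)$, and positivity of this expression for all $x$ at $t=1$ transfers to all $t>0$.

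The core step is then a maximum principle for $\dot U$ on $\R^n\times(0,1]$. I would adapt B.\ White's argument from \cite{WhiteSize}: one shows that if $\dot U$ were somewhere nonpositive, then a nonpositive infimum is attained at some $(x_0,t_0)$ with $t_0>0$ minimal and $|x_0|>0$ (the tip being excluded by the computation above, and spatial infinity being controlled because, by the barrier estimates in the proof of Lemma \ref{U above k}, $\graph U(\cdot,t)$ is asymptotic to the cone and $H[U]\to H[k]\ge0$ with the appropriate decay as $|x|\to\infty$, uniformly on $t\in[\tfrac12,1]$). At such an interior point the strong maximum principle applied to the linear parabolic equation for $\dot U$ forces $\dot U\equiv$ const $\le 0$ on a spacetime region, contradicting $\dot U(0,t)>0$. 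The main obstacle is exactly this asymptotic control: one must verify that along a minimizing sequence escaping to spatial infinity the gradient of $U$ stays bounded (so the equation stays uniformly parabolic) and that $\dot U$ does not develop a spurious nonpositive infimum at infinity — this is where the half-space condition \eqref{k iv} and the homothetic structure do the work, since they pin down the asymptotics of $U$ precisely. Once this is in place, $\dot U>0$ on $\R^n\times(0,1]$, hence for all $t>0$, and since $\dot U = \sqrt{1+|DU|^2}\,H[U]$ this is equivalent to $H[U]>0$, completing the proof.
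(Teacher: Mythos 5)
Your reductions are fine and partly coincide with the paper's: the tip computation $\dot U(0,t)=\tfrac1{2t}U(0,t)>0$, the scaling reduction to the slice $t=1$, and the final upgrade from $\dot U\ge0$ to $\dot U>0$ via the strong maximum principle are all correct (the last step is exactly how the paper concludes). The gap is in the core step, which is the actual content of the lemma: you never establish the weak inequality $\dot U\ge0$. Your ``nonpositive infimum attained at a minimal time $t_0>0$'' argument has no anchor at $t=0$: unlike $U-k$ in Lemma \ref{U above k}, the quantity $\dot U$ does not extend continuously to $t=0$, and there is no a priori information that $\liminf_{t\to0^+}\dot U\ge0$ --- the paper explicitly points out (proof of Lemma \ref{U above k}) that the construction of $U$ in \cite{EckerHuiskenInvent} mollifies the initial data, which may destroy $H\ge0$, so mean convexity of the cone does not simply transfer to small positive times. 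Worse, the homothetic structure you invoke actually rules out any first-time argument: by \eqref{U scaling}, $\dot U(x,t)=\tfrac1{2\sqrt t}\bigl(U-y\cdot DU\bigr)\bigl(x/\sqrt t,1\bigr)$, so if $\dot U(\cdot,1)$ were negative at even one point $y_0$, then along $x=y_0\sqrt t$ one has $\dot U\to-\infty$ as $t\to0^+$; the infimum over $\R^n\times(0,1]$ is then $-\infty$ and is approached only as $t\to0^+$, never attained, so there is no point $(x_0,t_0)$, $t_0>0$ minimal, at which to apply the strong maximum principle. The spatial decay of $H[U]-H[k]$ and the half-space condition \eqref{k iv} cannot repair this, because the obstruction sits at $t\to0^+$ (equivalently: the assertion on the single slice $t=1$ is precisely what must be proved, not an asymptotic boundary issue). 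Working instead on $[\tau,1]$ with ``initial'' data at $t=\tau$ is circular, since by self-similarity $\dot U(\cdot,\tau)\ge0$ is equivalent to $\dot U(\cdot,1)\ge0$.

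This is also where your route genuinely diverges from the paper's, despite your mention of \cite{WhiteSize}: White's argument is not a first-time/maximum-principle argument for $\dot u$, but a set-theoretic monotonicity statement for mean convex level-set flows, $F_{t+h}(K)\subset F_t(K)$. The authors adapt it by passing to epigraphs, checking the semigroup property (uniqueness for smooth data with bounded gradient) and the compatibility of the Ecker--Huisken construction with ordering under mollification; this yields $U(\cdot,t_1)\le U(\cdot,t_2)$ for $t_1\le t_2$, hence $\dot U\ge0$, and only then does the strong maximum principle with $\dot U(0,t)>0$ give strictness. If you want a purely PDE proof you would have to argue on the expander slice itself (the elliptic equation $H=\tfrac12\langle X,\nu\rangle$ with prescribed asymptotics to the mean convex cone), which is a different and more delicate problem; as written, your parabolic argument does not close.
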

\begin{proof}[Sketch of proof:]
Recall that \cite[Theorem 3.1]{WhiteSize} asserts for compact mean
convex level set solutions $F_t(K)$ to mean curvature flow that
$F_{t+h}(K)\subset\text{interior }F_t(K)$ for every $t,h>0$.

We proceed as in the proof of \cite[Theorem 3.1]{WhiteSize}, with the
following modifications:
\begin{itemize}
\item Remove ``compact'' and ``interior''. 
\item Consider epigraphs, i.\,e. $F_t(K)=\left\{\left(\hat
  x,x^{n+1}\right)\in\R^{n+1}\,:\, x^{n+1}\ge u(\hat x,t)\right\}$.
\item Observe that the semi-group property, \S 2.1 (4), follows as
  solutions to \eqref{mcf} for smooth initial data, which we have for
  positive times, with uniformly bounded gradient are unique.
\item Property \S 2.1 (6) is fulfilled if we construct solutions as in
  \cite{EckerHuiskenInvent}. Observe in particular that $u\le v$ is
  preserved under mollifications.
\end{itemize}
With these modifications, \cite[Theorem 3.1]{WhiteSize} extends to our
situation, i.\,e. $U(\cdot,t_1)\le U(\cdot,t_2)$ for $0\le t_1\le
t_2$.  Hence $\dot U(\cdot,t)\ge0$. As $\dot U(0,t)>0$ for $t>0$, the
strong maximum principle implies that $\dot U(\cdot,t)>0$.
\end{proof}
 
The difference between homothetic solutions starting at different  
times tends to zero for large times.  
\begin{lemma}\label{U converges lem}  
Let $k:\R^n\to\R$ be continuous and positive homogeneous of degree  
one. Let 
$U\in C^\infty_{loc}\left(\R^n\times(0,\infty)\right) \cap  
C^0_{loc}\left(\R^n\times[0,\infty)\right)$ be the homothetically  
expanding solution to \eqref{mcf} with $U(\cdot,0)=k$. Let $T>0$. Then  
$$U(\cdot,t+T)-U(\cdot,t)\to0\quad\text{as}\quad t\to\infty,$$  
uniformly in $C^k$ for any $k\in\N$.  
\end{lemma}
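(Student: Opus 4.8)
The plan is to exploit the exact scaling relation \eqref{U scaling}, which reduces the claim to a statement about a \emph{single} function, namely the profile $V(\cdot):=U(\cdot,1)$. Indeed, from \eqref{U scaling} we have $U(x,t)=\sqrt{t}\,V\!\left(x/\sqrt{t}\right)$, so
\[
U(x,t+T)-U(x,t)=\sqrt{t+T}\,V\!\left(\tfrac{x}{\sqrt{t+T}}\right)-\sqrt{t}\,V\!\left(\tfrac{x}{\sqrt{t}}\right).
\]
Writing $s=x/\sqrt t$ and $\lambda=\sqrt{(t+T)/t}\to1$ as $t\to\infty$, this is $\sqrt t\bigl(\lambda\,V(s/\lambda)-V(s)\bigr)$. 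So I want to understand the behaviour of the one-parameter rescaling $V_\lambda(s):=\lambda V(s/\lambda)$ near $\lambda=1$, with an eye on the fact that we multiply by the large factor $\sqrt t$. The key structural input is that $V$ is asymptotic to the linear-at-infinity cone: since $k$ is positive homogeneous of degree one and continuous, and by comparison with spheres (as in the proof of Lemma \ref{U above k}) one has $\sup_{\R^n\setminus B_r}|U(\cdot,1)-k|\to0$; moreover $V-k$ is bounded. Thus $V(s)=k(s)+w(s)$ with $w$ bounded and $w(s)\to0$ along rays. Because $k$ is homogeneous of degree one, $\lambda k(s/\lambda)=k(s)$ \emph{exactly}, so the cone part contributes nothing: $\lambda V(s/\lambda)-V(s)=\lambda w(s/\lambda)-w(s)$.

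First I would make this quantitative. Fix $\epsilon>0$. Choose $r$ so that $|w(s)|<\epsilon$ for $|s|\ge r$. For $|x|\ge 2r\sqrt t$ (say), both $s=x/\sqrt t$ and $s/\lambda$ lie outside $B_r$ once $t$ is large, so $|U(x,t+T)-U(x,t)|=\sqrt t\,|\lambda w(s/\lambda)-w(s)|\le\sqrt t\,(\lambda+1)\epsilon$ — but this is \emph{not} small, so the naive splitting is too crude at infinity. The honest fix is to use a genuine barrier at spatial infinity rather than the decay of $w$: by comparison with large spheres one gets a bound of the form $|U(x,t+T)-U(x,t)|\le C(T)/\sqrt{|x|^2+t}$ or similar, i.e. a uniform decay in the combined variable, coming from the homothetic nature of $U$ together with the avoidance principle against shrinking/expanding spheres tangent to $\graph k$ near infinity. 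Concretely: since $\graph k\subset\{z\ge l\}$ for a linear $l$, one slides spheres of radius $R$ under $\graph U(\cdot,t)$ and over $\graph U(\cdot,t+T)$; the homothety forces the gap at $|x|$ to scale like $t/|x|$ for $|x|\gg\sqrt t$, which tends to $0$. This handles the region $|x|\ge R_\epsilon\sqrt{t}$ uniformly.

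On the complementary region $|x|\le R_\epsilon\sqrt t$, i.e. $|s|\le R_\epsilon$, I would argue by interior parabolic regularity and compactness. On any fixed time interval $[1,2]$, the functions $U(\cdot,t)$ have locally uniform $C^\infty$ bounds (by Ecker–Huisken interior estimates for graphical mean curvature flow, since $U-k$ is bounded hence $U$ has at most linear growth and the flow is strictly parabolic on compacta for $t>0$). Rescaling via \eqref{U scaling} transfers these into estimates on $V=U(\cdot,1)$ and its rescalings $V_\lambda$ on $\{|s|\le R_\epsilon\}$ that are uniform for $\lambda\in[1,2]$. Then $\sqrt t\,(V_\lambda-V)$ with $\lambda=\sqrt{1+T/t}$ is, to leading order, $\sqrt t\cdot(\lambda-1)\cdot\frac{d}{d\lambda}\big|_{\lambda=1}V_\lambda=\frac{T}{2}\bigl(V(s)-s\cdot DV(s)\bigr)+o(1)$, bounded on $\{|s|\le R_\epsilon\}$ but \emph{not} tending to zero pointwise — which would be fatal. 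Here is where the homogeneity of $k$ must be used once more: $V(s)-s\cdot DV(s)=w(s)-s\cdot Dw(s)$, and one shows this quantity tends to $0$ as $|s|\to\infty$ using the decay of $w$ together with the interior gradient estimate for $w$ (a standard interpolation: $|Dw(s)|\lesssim \sup_{B_1(s)}|w|/1$ on the scale where the equation looks like the heat equation), so it is small for $r\le|s|\le R_\epsilon$, while for $|s|\le r$ we instead must show the left side is small directly.

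The main obstacle — and the genuinely substantive point — is that the pointwise difference $U(\cdot,t+T)-U(\cdot,t)$ need not be monotone or signed in general, so one cannot simply say ``it decreases to a limit.'' When $k$ is not linear, however, we have the extra ingredient $\dot U\ge 0$ (from the sketched White-type lemma, or more elementarily from $U(\cdot,t_1)\le U(\cdot,t_2)$), which gives $0\le U(\cdot,t+T)-U(\cdot,t)$, and combined with the sphere barriers at infinity and interior estimates on compacta gives a uniform bound $0\le U(x,t+T)-U(x,t)\le C(T)\min\{1,t/|x|\}$ — wait, this still is not obviously $\to0$ uniformly. The clean way out: show $U(x,t+T)-U(x,t)=\sqrt t\,g_\lambda(s)$ where $\|g_\lambda\|_{C^0(\R^n)}\le C(\lambda-1)=C\,T/(2t)+O(t^{-2})$, so that $\sqrt t\cdot C\,T/(2t)=CT/(2\sqrt t)\to0$. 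Establishing $\|V_\lambda-V\|_{C^0(\R^n)}\le C(\lambda-1)$ uniformly is the crux: near infinity it follows from the sphere barriers (the homothetic solution and its shifts both trap between spheres whose radii differ by $O(\lambda-1)$), and on compacta from the smooth $\lambda$-dependence of $V_\lambda$. The $C^k$ convergence for $k\ge1$ then follows by interpolating this $C^0$ decay against the uniform higher-order bounds, using interior Schauder estimates for the quasilinear parabolic equation \eqref{mcf} on the solution $U(\cdot,t)$ viewed on a fixed unit-time window $[t,t+T+1]$ and rescaling. I would organize the write-up as: (1) reduce to $V_\lambda-V$ via \eqref{U scaling}; (2) sphere barriers $\Rightarrow$ $\|V_\lambda-V\|_{C^0}\le C(\lambda-1)$; (3) interior estimates $\Rightarrow$ uniform $C^k$ bounds on $V_\lambda$; (4) interpolate to get $\|V_\lambda-V\|_{C^k}\le C_k(\lambda-1)^{1/2}$ say; (5) set $\lambda=\sqrt{1+T/t}$ and multiply by $\sqrt t$ to conclude.
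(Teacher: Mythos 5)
Your reduction via \eqref{U scaling} to the rescalings $V_\lambda(s)=\lambda V(s/\lambda)$, $\lambda=\sqrt{1+T/t}$, is fine, and you correctly identify that everything hinges on a bound of the form $\Vert V_\lambda-V\Vert_{C^0\left(\R^n\right)}\le C(\lambda-1)$, which after multiplying by $\sqrt t$ gives $CT/(2\sqrt t)\to0$. But that crux estimate is exactly where your argument stops being a proof: differentiating in $\lambda$ shows it is equivalent to a \emph{uniform spatial} bound on $V(s)-s\cdot DV(s)=2\dot U(s,1)$, i.e.\ to $\big|H[U(\cdot,1)]\big|\sqrt{1+|DU(\cdot,1)|^2}\le C$ on all of $\R^n$, and you never establish this; you only assert that ``near infinity it follows from the sphere barriers.'' As stated, the sphere comparison you describe (spheres tangent to the cone at scale $|x|$, shrinking under the flow) only yields $|U(x,t+T)-U(x,t)|\lesssim t/|x|$, which is small only for $|x|\gtrsim t/\epsilon$ and leaves the annulus $R_\epsilon\sqrt t\le|x|\lesssim t$ uncontrolled; indeed your earlier claim that ``the gap ... scale[s] like $t/|x|$ for $|x|\gg\sqrt t$, which tends to $0$'' is false there, since $t/|x|=\sqrt t/R_\epsilon$ on $|x|=R_\epsilon\sqrt t$ (you notice this later, but the replacement is again only asserted). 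To make the barrier version work you would need a uniform two-sided tangent-sphere (equivalently $C^{1,1}$ or $|A|$) bound for $\graph U(\cdot,1)$ near infinity, which is precisely the kind of a priori estimate that has to be imported, not waved at.

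Once you import that input, the whole $V_\lambda$ apparatus is unnecessary: this is what the paper does. By \cite[Theorem 3.1]{EckerHuiskenInvent} and \cite[Corollary 1]{StavrouSelfSim} one has $|H[U(\cdot,1)]|\le c$, and $DU$ is uniformly bounded for $t\ge1$; the scaling \eqref{U scaling} then gives $|H[U(\cdot,t)]|\le c/\sqrt t$, hence $\big|\dot U(\cdot,t)\big|\le c/\sqrt t$ by \eqref{mcf}, and integrating from $t$ to $t+T$ yields $\Vert U(\cdot,t+T)-U(\cdot,t)\Vert_{L^\infty\left(\R^n\right)}\le cT/\sqrt t$. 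The $C^k$ statement then follows, as you also propose, from the uniform gradient and local higher-derivative estimates of \cite{EckerHuiskenInvent} together with interpolation. So your plan is salvageable, but only by supplying the very curvature-plus-gradient bound that constitutes the paper's one-line argument; without it, the key step is a gap.
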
  
\begin{proof}  
We will prove that  
$$\Vert U(\cdot,t+T)-U(\cdot,t)\Vert_{L^\infty\left(\R^n\right)}\to0
\quad\text{as}\quad t\to\infty.$$ Then uniform gradient estimates and
local higher derivative estimates, see \cite[Theorems 2.3, 3.1 and
3.4]{EckerHuiskenInvent}, imply the claimed convergence.\par According
to \cite[Theorem 3.1]{EckerHuiskenInvent} and \cite[Corollary
1]{StavrouSelfSim}, we deduce that $|H[U(\cdot,1)]|\le c$. Hence
\eqref{U scaling} implies $|H[U(\cdot,t)]|\le \frac c{\sqrt t}$ and,
as $DU$ is uniformly bounded for $t\ge1$, $\big|\dot U(\cdot,t)\big|
\le\frac c{\sqrt t}$. We integrate from $t$ to $T+t$ and obtain the
claimed convergence.
\end{proof}  
  
As a consequence, we can prove Theorem \ref{main thm} if (the graphs  
of) $u_0$ and $U$ lie on the same side of $k$.  
\begin{proof}[Proof of Theorem \ref{main thm}, 1st part:]  
We will prove Theorem \ref{main thm} under three additional 
assumptions: 
\begin{enumerate}[(i)]  
\item\label{ass i} We have $U(\cdot,t)>k$ for any $t>0$. (This means 
  that $k$ is singular at the origin, see Corollary \ref{U strict 
  above k}. We consider linear functions $k$ in Appendix \ref{Rn 
  stable}.) 
\item\label{ass ii} For every $\delta>0$, there exists $t_\delta>0$  
  such that $u(\cdot,t_\delta)\ge k-\delta$.  
\item $n\ge3$. 
\end{enumerate}  
Let $\delta>0$ and $\epsilon=2\delta$. We obtain  
\begin{equation}\label{U as barrier eq}  
u(\cdot,0+t_\delta)-(U(\cdot,0)-\epsilon)  
=u(\cdot,0+t_\delta)-k+2\delta\ge\delta>0  
\end{equation}  
on $\R^n$. The functions $u$ and $U-\epsilon$ evolve by
\eqref{mcf}. Using small spheres as barriers, the compact maximum
principle implies that $u(\cdot,t+t_\delta)-(U(\cdot,t)-\epsilon)>0$
for some time interval $[0,\zeta]$, $\zeta>0$. By comparison with
large spheres and the compact maximum principle, we see that $u$ and
$U-\epsilon$ grow at most polynomially at spatial infinity. On any
bounded time interval of the form $[\zeta,T]$, the interior estimates
of \cite{EckerHuiskenInvent} imply uniform gradient bounds for
$U(\cdot,t)-\epsilon$. Hence the comparison principle of G. Barles,
S. Biton, M. Bourgoing and O. Ley, see \cite{BarlesetalUniqueness} or
\cite[Theorem A1]{JCOSFSMCFStability}, is applicable and implies that
$u(\cdot,t+t_\delta)-(U(\cdot,t)-\epsilon)\ge0$.\par On the other
hand, we have $u_0\le k+\epsilon$ in $\R^n\setminus B_r(0)$ for $r$
sufficiently large. According to \eqref{U scaling}, we find $T>0$ such
that $U(\cdot,T)\ge u_0$ in $B_r(0)$. Hence \eqref{close to k eq}
implies that $U(\cdot,T)+\epsilon\ge u_0$ in $\R^n$. As above, the
maximum principle implies that $U(x,t+T)+\epsilon\ge u(x,t)$ for all
$(x,t)$, $t\ge0$.\par Combining the above estimates, we get
$$U(\cdot,t)-\epsilon\le u(\cdot,t+t_\delta)\le  
U(\cdot,t+t_\delta+T)+\epsilon.$$  
Lemma \ref{U converges lem} implies that  
$$|u(\cdot,t)-U(\cdot,t)|\le3\epsilon$$ for $t$ sufficiently large. As  
$\epsilon>0$ was arbitrary, we obtain $C^0$-convergence. According to  
\cite{EckerHuiskenInvent}, higher derivatives are uniformly  
bounded. Hence interpolation inequalities imply the claimed  
convergence.  
\end{proof}  
  
\section{Barrier Construction}  
\label{barr sec}  
  
Let us describe the idea of the barrier construction in the case of  
Theorem \ref{H pos thm}. The proof of the corresponding statement for  
Theorem \ref{main thm} is more complicated. We give it below.  
  
Assume that $k$ is as in Theorem \ref{H pos thm}. Consider  
$w:=k-|x|^{-\alpha}$ for $\alpha>0$. We wish to show that $H[w](x)>0$  
for $|x|$ sufficiently large. Consider $H[w]\sqrt{1+|Dw|^2}$.  
According to the scaling behavior of the mean curvature of cones,  
there exists some $\epsilon>0$ such that $H[k](x) \sqrt{1+|Dk|^2}  
\ge\epsilon|x|^{-1}$. A direct calculations using the fundamental  
theorem of calculus, however, shows that $|H[w]\sqrt{1+|Dw|^2}  
-H[k]\sqrt{1+|Dk|^2}|\le c|x|^{-\alpha-2}$. Hence we obtain  
$H[w](x)\ge0$ for $|x|$ sufficiently large as claimed.  
  
In the case of Theorem \ref{main thm}, we obtain the barrier via a  
flow equation.  
\begin{lemma}  \label{barrier} 
Let $k$ be as in Theorem \ref{main thm} and $n\ge3$. Let $X_0$ denote
the embedding vector of $\graph k$ outside the origin and $\nu$ its
downwards pointing unit normal. Deform the embedding vector $X$
according to
$$\dt X=-F\nu,\quad X(\cdot,0)=X_0,\quad F(X)  
=-\left(|X|^2\right)^{-\alpha} \equiv-|X|^{-\frac{n-2}2}.$$ Then for  
$r$ sufficiently large, the image of $X(\cdot,1)$ in $(\R^n\setminus  
B_r(0))\times\R$ can be written as $\graph b$, $b:\R^n\setminus  
B_r(0)$, where $b$ fulfills  
\begin{enumerate}[(a)]  
\item $b\in C^\infty_{loc}$,  
\item $\sup\limits_{\R^n\setminus B_R(0)}|b-k|\to0$ as $R\to\infty$,  
\item $b<k$,  
\item $H[b]>0$.  
\end{enumerate}  
\end{lemma}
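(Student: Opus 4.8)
The plan is as follows. First I would verify that $\dt X=-F\nu$ --- a first-order flow moving the hypersurface with the prescribed normal speed $-F=|X|^{-(n-2)/2}$ --- exists smoothly on $\{|X|>r\}$ up to time $t=1$ once $r$ is large, and that $X(\cdot,t)$ remains arbitrarily $C^k$-close to $\graph k$ near infinity, uniformly in $t\in[0,1]$; this settles (a), (b) and (c). The core of the argument is (d): for it I would compute the evolution of the mean curvature under the flow, evaluate it at $t=0$ using that $\graph k$ is a cone, and deduce from Condition \eqref{stability condition} of Definition \ref{K def} that $H$ is forced to become positive.

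\emph{Existence and convergence to the cone.} Written for a graph the flow is the Hamilton--Jacobi equation
\[\dot u=-\left(|x|^2+u^2\right)^{-(n-2)/4}\sqrt{1+|Du|^2},\]
whose data is smooth away from the origin, so it has a smooth short-time solution, with smoothness persisting until possible shock formation, and it has finite propagation speed. Homogeneity of $k$ and of $F$ together with uniqueness yield the scaling symmetry $X(\lambda x,\lambda^{n/2}t)=\lambda X(x,t)$ for $\lambda>0$. The shell $\graph k\cap\{1\le|X|\le2\}$ is smooth and bounded away from the origin, so there the flow exists smoothly for some time $\tau_0>0$, and by finite propagation speed the singularity at the origin does not interfere; by the scaling symmetry the flow on $\{\varrho\le|X|\le2\varrho\}$ then exists smoothly up to time comparable to $\varrho^{n/2}\tau_0$, which exceeds $1$ once $\varrho$ is large. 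Patching over dyadic shells, for $r$ large the flow exists smoothly on $\{|X|>r\}$ for all $t\in[0,1]$. Moreover, dilating the part of $\Sigma_t:=X(\cdot,t)$ near $\{|X|\sim R\}$ by $R^{-1}$ identifies it with the time-$R^{-n/2}t$ flow out of $\graph k$ near $\{|X|\sim1\}$; as $R^{-n/2}t\to0$ when $R\to\infty$, uniformly in $t\in[0,1]$, this part converges in $C^k$ to $\graph k$ for every $k$. Hence over $\R^n\setminus B_r(0)$ the image of $X(\cdot,1)$ is the graph of some $b\in C^\infty_{loc}$ with $\sup_{\R^n\setminus B_R(0)}|b-k|\to0$ as $R\to\infty$, which gives (a) and (b), and $\dot u<0$ everywhere gives $b<k$, which is (c).

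\emph{Evolution of the mean curvature.} Under $\dt X=-F\nu$ a direct computation gives
\[\partial_t H=\Delta_{\Sigma_t}F+|A|^2F.\]
At $t=0$, since $\graph k$ is a cone, Euler's relation makes the position vector tangential, $\langle X,\nu\rangle=0$, so that $\left|\nabla_{\Sigma_0}|X|\right|=1$ and $\Delta_{\Sigma_0}|X|=(n-1)|X|^{-1}$. Writing $F=g(|X|)$ with $g(s)=-s^{-(n-2)/2}$, we obtain $\Delta_{\Sigma_0}F=g''(|X|)+(n-1)|X|^{-1}g'(|X|)=\tfrac{(n-2)^2}{4}|X|^{-n/2-1}$ and hence, using $-\tfrac n2-1=-\tfrac{n-2}2-2$,
\[\partial_t H|_{t=0}=|X|^{-(n-2)/2}\left(\left(\tfrac{n-2}2\right)^2|X|^{-2}-|A|^2\right).\]
By Condition \eqref{stability condition} of Definition \ref{K def}, this is strictly positive wherever $H[k]=0$.

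\emph{From $t=0$ to $t=1$.} To make this quantitative I would pass to the link $L=\graph k\cap\Sphere^n$, which is compact, and on which homogeneity gives $|A|^2(\varrho\omega)=a(\omega)\,\varrho^{-2}$ and $H[k](\varrho\omega)=h_0(\omega)\,\varrho^{-1}$ with $a,h_0$ smooth and $h_0\ge0$. As $\{h_0=0\}$ is compact and Condition \eqref{stability condition} is an open condition, there are $\eta>0$ and an open set $N\supset\{h_0=0\}$ in $L$ with $a\le\left(\tfrac{n-2}2\right)^2-\eta$ on $N$, while $h_0\ge\mu_0>0$ on the compact set $L\setminus N$. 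Combined with the $C^2$-closeness of $\Sigma_t$ to $\graph k$ near infinity established above, on the part of $\Sigma_t$ near $\{|X|\sim R\}$, with $R$ large, lying over link points of $N$ one gets $\partial_t H\ge\tfrac\eta2 R^{-n/2-1}>0$ for all $t\in[0,1]$, whereas $H\ge0$ at $t=0$; integrating, $H>0$ there at $t=1$. Over link points of $L\setminus N$ one instead has $H\ge\mu_0 R^{-1}$ at $t=0$ and $|\partial_t H|\le CR^{-n/2-1}$ on $[0,1]$, so $H\ge\mu_0 R^{-1}-CR^{-n/2-1}>0$ at $t=1$ once $R$ is large. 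Thus $H[b]>0$ on $\R^n\setminus B_r(0)$ for $r$ large, which is (d). The genuinely new input is the $t=0$ computation of $\partial_t H$ and the compactness argument on $L$, which turns the pointwise inequality of Condition \eqref{stability condition} into a uniform one; the main technical obstacle is instead the first step, namely the existence of this non-parabolic flow up to time $1$ together with its uniform $C^k$-convergence to the cone near infinity.
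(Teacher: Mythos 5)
Your proposal is correct and uses the same basic strategy as the paper: the same deformation $\dt X=-F\nu$ with $F=-|X|^{-(n-2)/2}$, the same observation that this exponent produces exactly the constant $\left(\tfrac{n-2}2\right)^2$ matching Condition (v) of Definition \ref{K def}, and the same dichotomy between a homothety-invariant neighbourhood of $\{H[k]=0\}$ on the link and its complement, where $|p|\,H[k]$ has a positive lower bound. Where you differ is in how positivity of $H$ is propagated from $t=0$ to $t=1$, and in how existence/graphicality is obtained. The paper derives the parametric evolution equations for $g_{ij},h_{ij},H,|A|^2,\nu$ and notes that, because the discarded term involves $|X|^2-\langle X,\nu\rangle^2\ge0$, the differential inequality $\dt H\ge(-F)\bigl(\left(\tfrac{n-2}2\right)^2|X|^{-2}-|A|^2-2\alpha|X|^{-2}\langle X,\nu\rangle H\bigr)$ holds on an \emph{arbitrary} hypersurface; it then only needs the $|A|^2$-evolution to show the curvature bound persists, and concludes by an ODE/maximum-principle argument without ever quantifying how close $X(\cdot,t)$ stays to the cone. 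You instead evaluate $\dt H=\Delta_{\Sigma_t}F+|A|^2F$ exactly at $t=0$ using $\langle X,\nu\rangle=0$ on the cone, and then propagate the strict inequality by uniform $C^2$-closeness of $\Sigma_t$ to $\graph k$ near infinity, which you obtain from the scaling symmetry $X(\lambda x,\lambda^{n/2}t)=\lambda X(x,t)$ together with continuous dependence; the same scaling argument also gives you existence up to time $1$ on $\{|X|>r\}$ and conclusions (a)--(c) via the graphical Hamilton--Jacobi formulation, whereas the paper cites first-order PDE theory and argues that $|X|$ and $\nu$ hardly change. Both routes work; the paper's buys independence from any closeness-to-the-cone estimate for the $H$-argument (at the cost of the $|A|^2$ computation), while yours buys a cleaner, exact $t=0$ identity at the cost of having to make the scaling/continuous-dependence step (uniform $C^2$-convergence in $t\in[0,1]$ as $R\to\infty$, and finite propagation speed) precise — these are asserted rather than proved in your sketch, but they are standard and not gaps in the idea.
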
  
\begin{proof}  
The evolution equation is a first order partial differential equation.
Standard results (for example, iterated application of the results in
\cite[\S{}3.2.4]{EvansPDE}) and the evolution equations below ensure
existence of a solution on $\left(\R^n\setminus
B_R(0)\right)\times[0,1]$ for some large $R$. Hence $b$ is smooth, if
we can write $X(\cdot,1)$ as a graph. Standard methods (see, for
example, \cite{OSA2,HuiskenRoundSphere,CGCPBook}; we also use the
notation used there) yield the following evolution equations
\begin{align*}  
\dt g_{ij}=&\,-2Fh_{ij},\umbruch\\ \dt  
h_{ij}=&\,F_{;ij}-Fh^k_ih_{kj}\umbruch\\  
=&\,4\alpha(\alpha+1)F|X|^{-4}\langle X,X_i\rangle\langle X,X_j\rangle  
-2\alpha F|X|^{-2}(g_{ij}-\langle X,\nu\rangle h_{ij})\\  
&\,-Fh^k_ih_{kj},\umbruch\\ \dt  
H=&\,\dt\left(g^{ij}h_{ij}\right)=-g^{ik}g^{jl}h_{ij}\dt g_{kl}  
+g^{ij}\dt h_{ij}\umbruch\\ =&\,(-F)\left(-|A|^2  
-4\alpha(\alpha+1)|X|^{-4} \left(|X|^2-\langle  
X,\nu\rangle^2\right)\right)\umbruch\\  
&\,+(-F)\left(2\alpha|X|^{-2}(n-\langle X,\nu\rangle  
H)\right),\umbruch\\  
\dt|A|^2=&\,\dt\left(g^{ij}h_{jk}g^{kl}h_{li}\right)  
=-2g^{ir}g^{js}h_{jk}g^{kl}h_{li}\dt g_{rs} +2g^{ij}h_{jk}g^{kl}\dt  
h_{li}\umbruch\\ =&\,2F\tr A^3-8\alpha(\alpha+1)(-F)|X|^{-4}\langle  
X,X_i\rangle h^{ij} \langle X_j,X\rangle\\  
&\,+4\alpha(-F)|X|^{-2}\left(H-\langle  
X,\nu\rangle|A|^2\right),\umbruch\\  
\dt\nu^\beta=&\,F_ig^{ij}X^\beta_j\umbruch\\  
=&\,2\alpha|X|^{-2\alpha-2}\left(X^\beta-\langle  
X,\nu\rangle\nu^\beta\right).  
\end{align*}  
We have the following geometric scaling: $|A|[k](p)\sim|p|^{-2}$ and
$H[k](p)\sim|p|^{-1}$.  Initially, we have $|A|^2(p)\le c_A|p|^{-2}$
for some constant $c_A>0$. As long as $|A|^2(p)\le 2c_A|p|^{-2}$, we
obtain
\begin{align}   \notag
\left|\dt|A|^2\right|\le&\,c|F||A|^3+c|F||X|^{-2}|A|
+c|F||X|^{-2}\left(|A|+|X||A|^2\right)\umbruch\\
\le&\,c|F||A|^3+c|F||X|^{-2}|A|\label{A evol i}\umbruch\\ 
\le&\,c |F| |X|^{-3} \le \frac c{|X|}|X|^{-2}.  \label{A evol}
\end{align}  
For the rest of the proof, we will always assume that $r$ is
sufficiently large, i.\,e.{} our conclusions hold in
$\left(\R^n\setminus B_r(0)\right)\times\R$.  The evolution equation
above justifies our assumption $|A|^2(p)\le 2c_A|p|^{-2}$ for $t\le1$;
hence we will assume $t\in [0,1]$.

The stability condition imposed on $k$ (Condition \eqref{stability
condition} of Definition \ref{K def}) ensures that there exists a
neighbourhood $\mathcal{N}\subset\R^{n+1}$ of the set on which $H=0$
which is invariant under homotheties and translations parallel to
$e_{n+1}$, and an $\epsilon>0$, such that
$|p|^2|A|^2(p)\le\left(\frac{n-2}2\right)^2-\epsilon$ on
$\mathcal{N}$. Now \eqref{A evol} ensures that $|p|^2|A|^2(p) <
\left(\frac{n-2}2\right)^2$ holds on
$\mathcal{N}\cap\left(\left(\R^n\setminus
B_r(0)\right)\times\R\right)$ for $r$ sufficiently large.
In $\mathcal N$, the maximum principle applied to 
\begin{align*}  
\dt H\ge&\,(-F)\left(-|A|^2+\left(\tfrac{n-2}2\right)^2|X|^{-2}  
-2\alpha|X|^{-2}\langle X,\nu\rangle H\right)\umbruch\\ >&\,2\alpha  
F|X|^{-2}\langle X,\nu\rangle H
\end{align*}  
implies that $H>0$ for $0<t\le1$. 

Let $\mathcal N^c:=\left(\R^{n+1}\setminus \mathcal
N\right)\setminus\left(B_r(0)\times\R\right)$. There exists $\delta>0$
such that $|p|\cdot H[k](p)\ge\delta$ in $\mathcal N^c$.  Equation
\eqref{A evol i} implies that $\left|\dt|A|\right|\le\frac
c{|X|}\frac1{|X|}$. Hence $H[k]>0$ is preserved in $\mathcal N^c$ for
$0\le t\le1$.

 Note also that $\left|\dt X\right|$  
becomes small near infinity. Hence $|X|$ hardly changes during the  
evolution. The evolution equation for $\nu$ implies that the normal  
hardly changes during the evolution. Hence $X(\cdot,1)$ can be written  
as graph of a smooth function. Moreover $b<k$ as $\nu$ hardly changes.  
Bounds on $k-b$ are immediate from the decay of $|F|$. The Lemma  
follows.  
\end{proof}  
 
\begin{lemma}\label{t delta ex lem} 
Let $k$ be as in Lemma \ref{barrier}. Let $$u\in
C^\infty_{loc}\left(\R^n\times(0,\infty)\right)\cap
C^0_{loc}\left(\R^n\times[0,\infty)\right)$$ be a solution to
\eqref{mcf} such that $u_0:=u(\cdot,0)$ fulfills \eqref{close to k
eq}.  Let $\delta>0$.  Then there exists $t_\delta>0$ such that
$$u(\cdot,t_\delta)\ge k-\delta.$$ 
\end{lemma}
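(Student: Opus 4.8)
The plan is to use the barrier $b$ constructed in Lemma~\ref{barrier} to trap $u$ from below after some time. The key point is that $b$ is a \emph{strict} subsolution near spatial infinity ($H[b]>0$ and $b<k$), so flowing it forward keeps it below $k$ while remaining below $u$, and on compact sets we use the smoothing of $u$ together with comparison with shrinking spheres.

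First I would fix $\delta>0$. By property (b) of Lemma~\ref{barrier}, choose $R$ large enough that $k-\tfrac\delta2\le b\le k$ on $\R^n\setminus B_R(0)$ (shrinking to the region where $b$ is defined and close to $k$). By the decay hypothesis \eqref{close to k eq} on $u_0$, enlarge $R$ further so that $u_0\ge k-\tfrac\delta2\ge b-\delta$ on $\R^n\setminus B_R(0)$; on $B_R(0)$ the function $u_0$ is merely continuous, hence bounded below, so after subtracting a large constant $C=C(\delta,R)$ we have $u_0\ge b-C$ everywhere, with $C$ chosen $\ge\delta$. Now let $B(\cdot,t)$ denote the solution of \eqref{mcf} with initial data $b$ on $\R^n\setminus B_R(0)$; since $b\in C^\infty_{loc}$ with $H[b]>0$, $B$ exists for a short time and, being a graphical mean curvature flow starting from a mean convex hypersurface, satisfies $B(\cdot,t)\ge b$ for small $t>0$, in particular $B(\cdot,t)<k$ by property (c) together with continuity (for $t$ small). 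The next step is to run the comparison principle: on $\R^n\setminus B_R(0)$ we compare $u$ with $B$ shifted down by $C$, using the interior gradient estimates of \cite{EckerHuiskenInvent} and the Barles--Biton--Bourgoing--Ley comparison principle exactly as in the first part of the proof of Theorem~\ref{main thm}, together with barriers built from spheres to handle the lateral boundary $\partial B_R(0)$ and spatial infinity. This gives $u(\cdot,t)\ge B(\cdot,t)-C$ for $t$ in the short existence interval of $B$.

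The actual mechanism for producing $t_\delta$ is that $-C$ must be absorbed: I would instead use the scaling structure. Since $b$ itself is the time-$1$ slice of a flow and is mean convex near infinity, the homothetic rescalings of the flow through $b$ spread it out; more directly, one compares $u$ against $B$ where $B$ starts from the graph of $b$ \emph{lifted up by $-\tfrac\delta2$ rather than lowered}, on the region $\R^n\setminus B_R(0)$, and separately handles $B_R(0)$ by comparison with a large sphere initially enclosing $\graph u_0$ restricted to $B_R(0)$ and lying below $\graph k$ elsewhere. The large sphere shrinks and disappears in finite time $t_\delta$, at which point $u(\cdot,t_\delta)$ has been pushed up to within $\delta$ of $k$ on $B_R(0)$, while on the exterior region the comparison with $B$ (which stays between $b$ and $k$) gives $u(\cdot,t_\delta)\ge b-\tfrac\delta2\ge k-\delta$. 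Choosing $t_\delta$ as the (finite) vanishing time of the sphere and using that $B$ exists past $t_\delta$ (by taking $R$ large, the exterior flow exists on a uniform time interval since $|F|$ and all derivatives are small there) completes the argument.

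The main obstacle I anticipate is the bookkeeping at the artificial boundary $\partial B_R(0)$: $b$ is only defined on $\R^n\setminus B_R(0)$, so one cannot simply flow $\graph b$ as a global graph, and the comparison on the exterior region requires a boundary barrier there that is compatible with the compactly supported spherical barrier used on $B_R(0)$. Ensuring these two barriers are consistent — i.e.\ that on the overlap annulus the spherical barrier lies below the flowed $b$, so that the two lower bounds can be patched into a single global lower bound for $u(\cdot,t_\delta)$ — is the delicate part, and is handled by choosing the enclosing sphere large enough and $R$ large enough that the geometric scales ($|F|\sim|X|^{-(n-2)/2}$ small, sphere radius large) separate cleanly. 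The remaining ingredients — short-time existence for the first-order flow in Lemma~\ref{barrier}, the $H[b]>0$ subsolution property, uniform gradient bounds from \cite{EckerHuiskenInvent}, and the comparison principle of \cite{BarlesetalUniqueness} — are all either proved above or quoted.
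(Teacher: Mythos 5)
The decisive gap is the mechanism on the compact piece $B_R(0)$. The shrinking sphere you propose cannot push $u$ upward: a sphere ``enclosing $\graph u_0$ restricted to $B_R(0)$'' intersects $\graph u_0$, so the avoidance principle between the graphical flow and the sphere flow does not apply at all; and even a sphere initially disjoint from and below the graph only yields a lower bound that \emph{deteriorates} as the sphere shrinks. The disappearance of a compact barrier at some finite time imposes no constraint afterwards, so the assertion that ``at which point $u(\cdot,t_\delta)$ has been pushed up to within $\delta$ of $k$ on $B_R(0)$'' has no justification. What is needed is an \emph{expanding} lower barrier, and this is exactly what the paper uses: with $m:=-\inf(u_0-k)$ one has $u_0\ge k-m=U(\cdot,0)-m$, so the exact solution $U(\cdot,t)-m$ is a lower barrier for all time; by Corollary \ref{U strict above k} and the scaling \eqref{U scaling}, $U(x,t)-k(x)=\sqrt t\,\bigl(U(\cdot,1)-k\bigr)\bigl(x/\sqrt t\bigr)\to\infty$ uniformly on compact sets, whence $U(\cdot,t)-m\ge k$ on $B_r(0)$ for all $t\ge t_\delta$. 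Without an ingredient of this type your $t_\delta$ is never produced.

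The second problem is the exterior region and the artificial boundary $\partial B_R(0)$, which you flag but do not resolve. Flowing $b$ by \eqref{mcf} on $\R^n\setminus B_R(0)$ is a boundary value problem, and any comparison with $u$ there requires ordering on the lateral boundary, where $u$ may lie as much as $m$ below $k$; ``choosing the enclosing sphere and $R$ large'' does not supply that ordering, because the needed interior lower bound is precisely what is missing. The paper sidesteps both issues: since $H[b]>0$, the \emph{static} function $b-\tfrac\delta2$ is already a subsolution, so $b$ is never flowed; one rescales $b^\lambda(x)=\lambda b(x/\lambda)$ (still admissible, since $k$ is a cone) with $\lambda m_1>m$ and $\lambda R_1>R$, so that near the inner edge of its domain $b^\lambda-\tfrac\delta2$ lies below $U-m$; then $B(x,t):=\max\bigl\{U(x,t)-m,\ b^\lambda(x)-\tfrac\delta2\bigr\}$, extended by $U-m$ where $b^\lambda$ is undefined, is a globally defined viscosity subsolution lying below $u_0$, and a single application of the comparison principle of \cite{BarlesetalUniqueness} yields $u\ge B$ for all $t\ge0$, hence $u(\cdot,t)\ge k-\delta$ for $t\ge t_\delta$. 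Your proposal lacks both this max-patching of barriers and the use of the expander as the interior barrier, so as written it does not prove the lemma.
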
   
\begin{proof} 
Fix $R>0$ such that $|u_0-k(x)|<\frac\delta2$ for $|x|\ge R$. Let 
$m:=-\inf\,(u_0-k)$. We may assume that $m>\delta$. Let $b$ be the 
barrier obtained in Lemma \ref{barrier}. Assume that $b$ is defined in 
$\R^n\setminus B_{R_1}(0)$ and set $m_1:=-\inf\,(b-k)$. Observe that for 
$\lambda>0$ 
$$b^\lambda(x)=\lambda b\left(\frac x\lambda\right)$$ also fulfills
the conditions on $b$ in Lemma \ref{barrier}. Choose $\lambda>0$ such
that $\lambda m_1>m$ and $\lambda R_1>R$. Hence
$B(x,t):=\max\left\{U(x,t)-m,b^\lambda(x)-\frac\delta2\right\}$ (and
$B(x,t):=U(x,t)-m$, where $b^\lambda$ is not defined) is a subsolution
to \eqref{mcf} in the viscosity sense. As $B(\cdot,t)$ is asymptotic
to a cone, the maximum principle \cite[Theorem
2.1]{BarlesetalUniqueness} implies that $u(\cdot,t)\ge B(\cdot,t)$ for
all $t\ge0$. There exists $r>0$ such that $b\ge k-\delta$ on
$\R^n\setminus B_r(0)$. Positivity of $U(\cdot,t)-k$ for any $t>0$ and
\eqref{U scaling} imply that there exists $t_\delta>0$ such that
$U(x,t)-m\ge k(x)$ for $|x|\le r$ and all $t\ge t_\delta$. We obtain
$u(\cdot,t)\ge k-\delta$ for all $t\ge t_\delta$.
\end{proof} 
 
\begin{proof}[Proof of Theorem \ref{main thm}, 2nd part:]  
Here we will prove Theorem \ref{main thm} under the assumption that
$k$ is not a linear function, see Assumption \eqref{ass i} in the
first part of the proof. This case is considered in Appendix \ref{Rn
stable}. We will also assume that $n\ge3$. If $n=1$ or $n=2$, $k$ is
convex. We will consider this case independently.
 
According to Corollary \ref{U strict above k}, $U(\cdot,t)-k>0$ for 
$t>0$. Lemma \ref{t delta ex lem} implies that $t_\delta$ as in 
Assumption \eqref{ass ii} in the first part of the proof of Theorem 
\ref{main thm} exists. Hence the result follows from the proof given 
there. 
\end{proof}  
 
The proof becomes simpler if the function $k$ is convex.  
\begin{proof}[Proof of Theorem \ref{main thm}, 3rd part:] 
Here we assume that $k$ is convex.  This is obvious if $n=1$. If
$n=2$, we observe, that outside the origin, one principal curvature of
a cone vanishes. Hence $H\ge0$ is equivalent to local convexity
outside the origin. As $k$ is a cone, by continuity, the function $k$
is convex on all of $\R^n$.
 
For any point in $\graph k$, there exists a supporting hyperplane,
which we assume to be $\graph l$ for some linear function $l$. This
hyperplane is a supporting hyperplane for a half-line in $\graph
k$. Hence for every $\delta>0$ the considerations in Appendix \ref{Rn
stable} imply the existence of $t_\delta>0$ such that $u(\cdot,t)\ge
l-\delta$.  As the decay assumption \eqref{close to k eq} is
independent of the direction in which we approach infinity, the
results of Appendix \ref{Rn stable} and considerations as in the proof
in Appendix \ref{uniform appendix}, applied to the results in Appendix
\ref{Rn stable} after an appropriate rotation, imply that $t_\delta>0$
can be chosen independently of $l$. Hence Assumption \eqref{ass ii} in
the first part of the proof of Theorem \ref{main thm} is fulfilled. We
can now proceed as in the first part of the proof.
\end{proof} 
 
\begin{remark}
If $k\le u_0\le U(\cdot,T)$ for some $T>0$, then the proof of Theorem
\ref{main thm} implies the decay rate
$$\sup\limits_{\R^n}|u(\cdot,t)-U(\cdot,t)|\le\tfrac c{\sqrt t}.$$
According to the considerations in the proof of Lemma \ref{U converges
lem}, this rate is sharp.
\end{remark}

\section{BV-Perturbations} 
\label{BV sec} 
 
In this section we discuss how to replace \eqref{close to k eq} by a
weaker condition that allows for additional decaying perturbations in
$BV_{loc}$ in the case $n\ge2$.
 
In the following, it is possible to consider $u\in 
C^0_{loc}\left(\R^n\right) \cap BV_{loc}\left(\R^n\right)$. For the 
sake of an easier presentation, however, we will assume that $u\in 
C^1_{loc}\left(\R^n\right)$. Set $\Vert 
u\Vert_{BV(\Omega)}:=\int\limits_\Omega|u|+|Du|$.  We generalize 
\eqref{close to k eq} as follows: Assume that there exists 
$\epsilon:\R_+\to\R_+$, depending on $u$, such that $\epsilon(r)\to0$ 
as $r\to\infty$ and for all $r>0$  
\begin{equation}\label{close to k plus BV} 
\sup\limits_{x\in\R^n\setminus B_r(0)}\Vert 
u_0-k\Vert_{BV(B_1(x)\cap\{|u_0-k|>\epsilon(r)\})}<\epsilon(r). 
\end{equation} 
 
In order to show that our results remain valid under this initial 
assumption, it suffices to prove that \eqref{close to k plus BV} 
implies a condition of the form \eqref{close to k eq} for some 
positive time. More precisely, it suffices to show that for every 
$\delta>0$ there exist $r>0$ and $t_0=t_0>0$, both depending on $u$, 
such that 
\begin{equation}\label{close to k with t} 
|u(\cdot,t_0)-k|<\delta\quad\text{in}\quad \R^n\setminus B_r(0). 
\end{equation} 
This is a consequence of Brakke's clearing out lemma
\cite{BrakkeBook}:
\begin{lemma}\label{A lem}
Let $k:\R^n\to\R$ be a cone which is smooth outside the origin.  Let 
$u_0\in C^1_{loc}\left(\R^n\right)$. Assume that there exists 
$\epsilon:\R_+\to\R_+$ such that $u_0$ fulfills \eqref{close to k plus 
BV}. Let $u\in C^\infty_{loc}\left(\R^n\times(0,\infty)\right)\cap 
C^0_{loc}\left(\R^n\times[0,\infty)\right)$ be a solution to 
\eqref{mcf} with $u(\cdot,t)=u_0$. Let $\delta>0$. Then there exist 
$t_0>0$ and $r>0$ such that \eqref{close to k with t} is fulfilled. 
\end{lemma}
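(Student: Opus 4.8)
The plan is to run Brakke's clearing out lemma \cite{BrakkeBook} on the Brakke flow associated with $\graph u$, after subtracting off the cone. First I would reduce to a local statement: by \eqref{close to k plus BV}, fix $r_1$ large so that $\epsilon(r_1)$ is as small as we like, and note that on every ball $B_1(x)$ with $|x|\ge r_1$ the set $\{|u_0-k|>\epsilon(r_1)\}$ has small $BV$-norm, hence small measure and small perimeter. The idea is that the portion of $\graph u_0$ lying at height more than $\epsilon(r_1)$ above or below $\graph k$ is a thin $BV$-set sitting near the (smooth, fixed) hypersurface $\graph k$; away from the origin $\graph k$ is a smooth minimal-like reference surface with bounded geometry at unit scale, so after the short time the flow should sweep this thin set into the $\delta$-neighbourhood of $\graph k$.

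The key steps, in order: (1) Rescale. Since $k$ is homogeneous of degree one and smooth outside the origin, at the point $x$ with $|x|=\rho$ large we parabolically rescale by $\rho$; the rescaled cone is a fixed smooth hypersurface (independent of $\rho$) on the unit ball, and the rescaled flow is again a solution of \eqref{mcf}. In these coordinates \eqref{close to k plus BV} says the initial height excess over $k$ is a $BV_{loc}$-function of norm $<\epsilon(r_1)/\rho$ — which is small — on the unit ball. (2) Pass to varifolds. Let $V_t$ be the Brakke flow given by $\graph u(\cdot,t)$; graphical mean curvature flow with the gradient bounds coming from \cite[Theorems 2.3, 3.1]{EckerHuiskenInvent} (valid for $t>0$) is a genuine Brakke flow. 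Apply the clearing out lemma to $V_t$ relative to the fixed smooth comparison surface $\graph k$: if the Gaussian-type density (or the mass in a unit ball measured against $\graph k$) of the ``excess'' region is below the universal threshold $\varepsilon_{\mathrm{Brakke}}$ at time $0$, then that region is empty in a ball of definite size after a definite time $t_0$. Because the mass bound in (1) can be made smaller than $\varepsilon_{\mathrm{Brakke}}$ by choosing $r_1$ large, clearing out applies at every point $x$ with $|x|\ge r_1$ with the same $t_0$. (3) Undo the rescaling and translate the conclusion back: the excess region of $\graph u(\cdot,t_0)$ is empty over $\R^n\setminus B_r(0)$ for a suitable $r$, which is exactly \eqref{close to k with t}. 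One uses here that $n\ge2$ so that the relevant Hausdorff measure / perimeter quantities behave as expected and the $BV$-smallness controls the varifold mass.

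The main obstacle I expect is step (2): making precise what ``the density of the excess region relative to $\graph k$'' means and checking the hypotheses of the clearing out lemma in a form that is uniform in the basepoint $x$. Brakke's lemma is stated for the distance to a point (or the mass ratio in a ball); here we need it relative to a moving, curved reference surface, so one must either work in the ``distance to $\graph k$'' variable — using that $\graph k$ has uniformly bounded second fundamental form at unit scale away from the origin, so its tubular neighbourhood is well-defined — or localise and compare with flat clearing out plus a perturbation argument absorbing the bounded curvature of $\graph k$ into the error. Controlling the interaction of the thin $BV$-set with the curvature of $\graph k$, and ensuring the threshold $\varepsilon_{\mathrm{Brakke}}$ and the time $t_0$ can be taken independent of $|x|$ (which is where homogeneity of $k$ and the $\rho$-rescaling are essential), is the technical heart of the argument. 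Once that uniformity is in hand, summing over $x$ and choosing $r$ appropriately is routine, and \eqref{close to k eq} holds at time $t_0$, so all earlier results apply with $u(\cdot,t_0)$ in place of $u_0$.
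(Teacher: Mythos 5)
You have identified the right tool (Brakke's clearing out lemma applied to the graphical flow, which is indeed a Brakke flow), but the step you yourself flag as ``the technical heart'' --- formulating and verifying a clearing out statement \emph{relative to the curved reference surface} $\graph k$ --- is precisely what is missing, and without it the argument does not close. There is no need for such a relative version, and the paper avoids it by an elementary device you do not have: since $Dk$ is homogeneous of degree zero, $\sup|Dk|\le G$ globally, so for $0<\rho<1$ the Euclidean ball $B_\rho\bigl(x,k(x)+2\rho+\rho G\bigr)\subset\R^{n+1}$ lies entirely above $\graph(k+\rho)$. Hence the mass of $\graph u_0$ in that ball is at most the area of $\graph u_0$ above $k+\rho$ over $B_\rho(x)$, and this area is controlled by the hypothesis \eqref{close to k plus BV} through the estimate
\[
\int_{\Omega}\sqrt{1+|Du_0|^2}
\le\Bigl(4+\tfrac{3G}\rho\Bigr)\,\Vert u_0-k\Vert_{BV(\Omega)},
\qquad \Omega:=\{y\in B_\rho(x):u_0(y)>k(y)+\rho\},
\]
obtained by splitting into $\{|Du_0|\ge2G\}$, where $\sqrt{1+|Du_0|^2}\le2|Du_0|\le4|D(u_0-k)|$, and $\{|Du_0|<2G\}$, where the integrand is $\le3G$ and one uses $|u_0-k|/\rho>1$ on $\Omega$. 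This conversion of the $BV$-smallness of $u_0-k$ into smallness of the varifold mass in point-centered balls is the actual content of the proof, and your proposal contains no substitute for it; asserting that the ``excess region'' has small density relative to $\graph k$ and that a clearing out threshold applies to it is exactly the unproved claim.

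Two further remarks. First, the parabolic rescaling by $|x|$ buys you nothing here: the area ratios entering the clearing out lemma are scale invariant, and the only geometric input needed --- a uniform gradient bound for $k$ away from the origin --- already holds at unit scale by homogeneity; the smallness must in any case come from the unit-scale estimate above, not from rescaling. Second, the conclusion of clearing out is that a specific point is avoided at a time $t_0=c\rho^2$; to get \eqref{close to k with t} you must rule out the graph at time $t_0$ sitting at \emph{any} height beyond $k(x)\pm(2\rho+\rho G)$, i.e.\ apply the argument to every point of $\{x\}\times\bigl((-\infty,k(x)-2\rho-\rho G)\cup(k(x)+2\rho+\rho G,\infty)\bigr)$, and then choose $\rho$ with $\rho(2+G)<\delta$. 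Your step (3) glosses over this passage from ``a point is cleared'' to the two-sided height bound.
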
 
\begin{proof} 
Fix $x\in\R^n$ and $0<\rho<1$. Let us assume that $\sup|Dk|\le G$
for some $G\ge1$. Then we get $k(y)\le k(x)+\rho G$ for $y\in
B_\rho(x)$. Hence the ball $B_\rho(x,k(x)+2\rho+\rho G)
\subset\R^{n+1}$ lies above $k+\rho$. Let us estimate the area $A$ of
$\graph u_0$ above $k+\rho$ in $B_\rho(x)\times\R$.
% $\rho=\delta/2$. 
Set $\Omega:=\{y\in B_\rho(x)\,:\,u_0(y)>k(y)+\rho\}$. We have
\begin{align*} 
A=&\,\int\limits_{\Omega} \sqrt{1+|Du_0|^2}
\le\int\limits_{\Omega\cap\{|Du_0|\ge2G\}} 2|Du_0|
+\int\limits_{\Omega\cap\{|Du_0|<2G\}} \sqrt{1+4G^2}\umbruch\\
\le&\,\int\limits_{\Omega\cap\{|Du_0|\ge2G\}} 4|D(u_0-k)|
+\int\limits_{\Omega\cap\{|Du_0|<2G\}} 3G\frac{|u_0-k|}\rho\umbruch\\
\le&\,\left(4+\tfrac{3G}\rho\right)\Vert u_0-k\Vert_{BV(\Omega)}
\le\left(4+\tfrac{3G}\rho\right)\Vert u_0-k\Vert_{BV(B_1(x)\cap
\{|u_0-k|>\epsilon(|x|)\})}
\end{align*} 
for $\epsilon=\epsilon(|x|)<\rho$. According to \eqref{close to k plus
BV}, the right-hand side is small for all $|x|$ sufficiently
large. Hence the clearing out lemma, \cite[Lemma 6.3]{BrakkeBook} or
\cite[Proposition 4.23]{KEMCFBook}, implies that $(x,k(x)+2\rho+\rho
G)\not\in\graph u(\cdot,t_0)$ for some $t_0=c\rho^2>0$. A similar
argument ensures that $\{x\}\times((-\infty,k(x)-2\rho-\rho
G)\cup(k(x)+2\rho+\rho G,\infty))\cap\graph u(\cdot,t_0)=\emptyset$
for all $|x|$ sufficiently large. The claim follows.
\end{proof} 
 
All the arguments in the proof of Theorem \ref{main thm} extend to
such perturbations $u$ if we use \eqref{close to k with t} instead of
\eqref{close to k eq}.

Observe also that instead of \eqref{close to k plus BV} we could
require directly that the area $A$ of $\graph u_0$ estimated in the
proof of Lemma \ref{A lem} is small enough to apply the clearing out
lemma.
 
\begin{appendix}  
\section{Stability of $\R^n$ under Mean Curvature Flow}  
\label{Rn stable}  
  
\begin{remark} 
In this section, we will always assume that our solutions to mean 
curvature flow are graphical. For other complete solutions with 
corresponding behavior at infinity, such theorems are also true if the 
solutions exist for $t\in[0,\infty)$. A graphical solution, which is 
initially above the considered solution, shifted by $\epsilon$, can be 
used as a barrier. This implies these more general results. 
\end{remark} 
 
In the following theorem, we show that a solution to \eqref{mcf} 
leaves every half-space if near infinity it is initially not too far 
inside the half-space. The following proof is due to G. Huisken.  
\begin{theorem}\label{half space avoid thm} 
Let $u_0:\R^n\to\R$ be continuous and assume that for every 
$\epsilon>0$ there exists $r>0$ such that $u_0<\epsilon$ in 
$\R^n\setminus B_r(0)$. Let $u\in 
C^\infty_{loc}\left(\R^n\times(0,\infty)\right) \cap 
C^0_{loc}\left(\R^n\times[0,\infty)\right)$ be a solution to 
\eqref{mcf}. Then 
$$\limsup\limits_{t\to\infty}\sup\limits_{\R^n} u(\cdot,t)\le 0.$$ 
\end{theorem}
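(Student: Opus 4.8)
The plan is as follows. Fix $\epsilon>0$; since $\epsilon$ is arbitrary it suffices to show $\limsup_{t\to\infty}\sup_{\R^n}u(\cdot,t)\le\epsilon$ and then let $\epsilon\downarrow0$. By hypothesis choose $r>0$ with $u_0<\epsilon$ on $\R^n\setminus B_r(0)$, and set $M:=\max\{\epsilon,\sup_{\overline{B_r(0)}}u_0\}$, which is finite by continuity of $u_0$. Fix a nonnegative $\beta\in C^\infty_c(\R^n)$ with $\operatorname{supp}\beta\subset B_{r+1}(0)$ and $\beta\ge M-\epsilon$ on $B_r(0)$, and let $g$ be the solution of \eqref{mcf} with $g(\cdot,0)=\epsilon+\beta$. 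Then $g(\cdot,0)$ is smooth and bounded, and $g(\cdot,0)\ge u_0$ on all of $\R^n$, since $g(\cdot,0)\ge M\ge u_0$ on $\overline{B_r(0)}$ and $g(\cdot,0)\ge\epsilon>u_0$ on $\R^n\setminus B_r(0)$.

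First I would show that $g$ is an admissible upper barrier. By \cite{EckerHuiskenInvent} the solution $g$ exists for all $t\ge0$ and stays a bounded smooth graph, with uniform gradient and higher order estimates; comparing with the constants $\epsilon$ and $\sup g(\cdot,0)$ shows $\epsilon\le g(\cdot,t)\le\sup g(\cdot,0)$. Since the hyperplane $\{x^{n+1}=\epsilon\}\subset\R^{n+1}$ is a stationary solution lying below $\graph g(\cdot,0)$, the comparison principle gives $g(\cdot,t)\ge\epsilon$. Comparison of $u$ with large spheres, exactly as in Section~\ref{U barrier sec}, shows that $u$ grows at most polynomially at spatial infinity; hence the comparison principle of \cite{BarlesetalUniqueness} (see also \cite[Theorem~A1]{JCOSFSMCFStability}) applies to $u$ and $g$ and yields $u(\cdot,t)\le g(\cdot,t)$ for all $t\ge0$. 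So it remains to prove that the perturbation $h:=g-\epsilon\ge0$ decays: $\sup_{\R^n}h(\cdot,t)\to0$ as $t\to\infty$, for then $\limsup_{t\to\infty}\sup_{\R^n}u(\cdot,t)\le\limsup_{t\to\infty}\bigl(\epsilon+\sup_{\R^n}h(\cdot,t)\bigr)=\epsilon$.

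To prove $\sup h(\cdot,t)\to0$, write \eqref{mcf} for $h$ in the form $\dot h=a^{ij}(Dh)\,\partial_{ij}h$ with $a^{ij}(p)=\delta^{ij}-\tfrac{p^ip^j}{1+|p|^2}$. Because $h(\cdot,0)=\beta$ is smooth with bounded derivatives, short-time regularity together with the interior estimates of \cite{EckerHuiskenInvent} bound $|Dh|$ uniformly on $\R^n\times[0,\infty)$; hence the coefficients $a^{ij}(Dh(x,t))$ are smooth, bounded and uniformly elliptic there, so that $h$ is a bounded solution of a uniformly parabolic equation on $\R^n\times[0,\infty)$ with smooth initial datum $\beta\in L^1(\R^n)$. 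Representing $h$ through the fundamental solution $\Gamma$ of this equation, $h(x,t)=\int_{\R^n}\Gamma(x,t;y,0)\,\beta(y)\,dy$, and invoking the standard Gaussian upper bound $\Gamma(x,t;y,0)\le Ct^{-n/2}\exp\!\bigl(-c|x-y|^2/t\bigr)$, we obtain $\sup_{\R^n}h(\cdot,t)\le Ct^{-n/2}\,\|\beta\|_{L^1(\R^n)}\to0$ as $t\to\infty$. Letting $\epsilon\downarrow0$ completes the proof. (Alternatively one could simply cite that a bounded graphical solution which is initially a compactly supported perturbation of a hyperplane converges uniformly to that hyperplane.)

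The step I expect to be the main obstacle is the last one, the quantitative decay of the bump $h$. It rests on two points that need care: first, that the equation satisfied by $h$ is genuinely uniformly parabolic on all of $\R^n\times[0,\infty)$, which uses the global gradient bound coming from the smooth compactly supported initial bump together with the Ecker--Huisken estimates; and second, the Gaussian upper bound on the fundamental solution. A secondary technical point is the comparison $u\le g$ on the noncompact space $\R^n$, which relies on the a priori polynomial growth bound for $u$ obtained from comparison with large spheres, so that the comparison principle of \cite{BarlesetalUniqueness} is applicable.
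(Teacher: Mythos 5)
Your reduction to the decay of the bump $h=g-\epsilon$ is a reasonable strategy, and the construction of the upper barrier $g$ with $g(\cdot,0)=\epsilon+\beta\ge u_0$ is fine; but the step you yourself single out as the main obstacle is, as written, a genuine gap, and it is exactly where the hard content of the theorem sits. The equation $\dot h=a^{ij}(Dh)\,\partial_{ij}h$ is in \emph{non-divergence} form (the divergence structure of \eqref{mcf} carries the extra factor $\sqrt{1+|Dh|^2}$, so it is not of the form $\partial_t h=\div(A\,Dh)$), and the ``standard Gaussian upper bound'' $\Gamma(x,t;y,0)\le Ct^{-n/2}\exp\bigl(-c|x-y|^2/t\bigr)$ with a constant uniform for all $t>0$ is not a citable standard fact in this setting: Nash--Aronson bounds require divergence form, while the parametrix bounds for non-divergence equations with H\"older coefficients hold on bounded time intervals with constants depending on the interval, so they do not give $\sup_{\R^n}h(\cdot,t)\le Ct^{-n/2}\Vert\beta\Vert_{L^1}\to0$. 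The uniform-in-time $L^1\to L^\infty$ decay you need is precisely the statement that a compactly supported bump on a hyperplane flattens out; your fallback citation of that fact is essentially Theorem \ref{Rn stable thm}, which in this paper is \emph{deduced from} Theorem \ref{half space avoid thm} (and the earlier result \cite[Appendix C]{JCOSFSMCFStability} does not cover all dimensions), so it is circular here. The paper closes exactly this hole by an explicit supersolution: it shows that $\Phi(x,t)=(4\pi t)^{-n/2}e^{-|x|^2/4t}$, evaluated along the evolving graph, satisfies $\dt\Phi-\Delta^{M_t}\Phi\ge0$, and then uses $a\Phi+\epsilon$ as a barrier for the height function $u^X$ directly (no auxiliary solution $g$, no kernel estimates); some computation of this kind, or a genuine divergence-structure argument, is what your proof is missing.

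A secondary, fixable issue: the claim that ``$u$ grows at most polynomially at spatial infinity'' does not follow from the hypotheses. Only an upper bound on $u_0$ near infinity is assumed; $u_0$ may tend to $-\infty$ arbitrarily fast, and comparison with large spheres from below only gives $u(x,t)\ge\inf_{B_R(x)}u_0-R$, which need not be polynomial. So the appeal to the comparison principle of \cite{BarlesetalUniqueness} as you state it is not justified. One can repair this either by comparing $g$ with $\max(u,0)$, which is still a viscosity subsolution of \eqref{mcf} (constants are solutions) and is bounded on bounded time intervals by compact sphere barriers from above, or by localizing as the paper does: place large spheres above the graph near infinity to get $u\le\epsilon+\epsilon/2$ outside a compact set on bounded time intervals, and then apply the maximum principle on the remaining compact region.
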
 
 
\begin{proof}
Consider $\Phi(x,t):=\frac1{(4\pi t)^{n/2}}e^{-\frac{|x|^2}{4t}}$. Set 
$M_t:=\graph u(\cdot,t)$. We claim that 
$$\dt\Phi(X,t)-\Delta^{M_t}\Phi(X,t) 
=\dt\Phi(X,t)-g^{ij}(\Phi(X,t))_{;ij}\ge0.$$ This is equivalent to 
$$\dt\Psi(X,t)-g^{ij}(\Psi(X,t))_{;ij}
-g^{ij}(\Psi(X,t))_i(\Psi(X,t))_j\ge0,$$ where
$$\Psi=\Psi(X,t)=\log\Phi(X,t)+\frac n2\log(4\pi)=-\frac n2\log 
t-\frac{|X|^2}{4t}.$$ We obtain 
\begin{align*} 
\dt\Psi(X,t)=&\,-\frac n{2t}+\frac{|X|^2}{4t^2}+\frac{\langle 
X,\nu\rangle H}{2t},\umbruch\\ \Psi_i=&\,-\frac{X^\alpha\ol 
g_{\alpha\beta}X^\beta_i}{2t},\umbruch\\ 
\Psi_{;ij}=&\,-\frac{g_{ij}}{2t}+\frac{\langle X,\nu\rangle 
h_{ij}}{2t},\umbruch\\ 
\dt\Psi-g^{ij}\Psi_{;ij}-g^{ij}\Psi_i\Psi_j=&\,\frac{|X|^2}{4t^2} 
-\frac{X^\alpha\ol g_{\alpha\beta}X^\beta_ig^{ij}X^\gamma_j\ol 
g_{\gamma\delta} X^\delta}{4t^2}\umbruch\\ 
=&\,\frac1{4t^2}\left(|X|^2-X^\alpha\ol g_{\alpha\beta}\left(\ol 
g^{\beta\gamma}-\nu^\beta\nu^\gamma\right)\ol g_{\gamma\delta} 
X^\delta\right)\umbruch\\ 
=&\,\frac{\langle X,\nu\rangle^2}{4t^2}\ge0. 
\end{align*} 
Define $\eta:=(0,\ldots,0,1)$. Then $u^X:=\eta_\alpha X^\alpha$ equals
$u$ up to a change in the parametrization. We get
$$\dt u^X-g^{ij}(u^X)_{;ij}
=\eta_\alpha(-H\nu^\alpha+g^{ij}h_{ij}\nu^\alpha)=0.$$ Using large
balls as barriers, we see that for every $T>0$ and every $\epsilon>0$,
there exists $r>0$ such that $u^X\le\epsilon/2$ on $\graph
u(\cdot,t)|_{\R^n\setminus B_r(0)}$, $t\in[0,T]$. Fix $t_0>$ small and
$\epsilon>0$. As $\Phi(\cdot,t)$ is a continuous positive function,
there exists $a>0$ such that $a\Phi(X,t_0) +\epsilon
-u^X(\cdot,t_0)\ge0$ in $B_r(0)\times\R$. The considerations above
ensure that $a\Phi(X,t)+\epsilon-u^X(X,t)\ge\epsilon/2$ for $t$ in a
bounded time interval and $X\in\left(\R^n\setminus
B_r(0)\right)\times\R$ for $r$ sufficiently large, depending in
particular on the time interval considered. As
$$\dt\left(a\Phi-u^X\right)-g^{ij}\left(a\Phi-u^X\right)_{;ij}\ge0,$$
the maximum principle implies that $a\Phi+\epsilon-u^X\ge0$ for all
$t\ge t_0$.  As $a\Phi\to0$, uniformly as $t\to\infty$, we deduce that
$$\limsup\limits_{t\to\infty}\sup_{x\in\R^n}u(x,t)\le2\epsilon.$$ 
The claim follows.  
\end{proof} 
 
As a corollary to Theorem \ref{half space avoid thm} we obtain the 
following stability theorem, which generalizes \cite[Appendix 
C]{JCOSFSMCFStability} to all dimensions. 
\begin{theorem}\label{Rn stable thm} 
Let $u_0\in C^0_{loc}\left(\R^n\right)$ fulfill \eqref{close to k 
eq}. Let $u\in C^\infty_{loc}\left(\R^n\times(0,\infty)\right) \cap 
C^0_{loc}\left(\R^n\times[0,\infty)\right)$ be a solution to 
\eqref{mcf}. Then 
$$\sup\limits_{\R^n}|u(\cdot,t)|\to0\quad\text{as} \quad t\to\infty.$$ 
Moreover, $$\Vert D^\alpha 
u(\cdot,t)\Vert_{L^\infty\left(\R^n\right)}\to0 \quad\text{as}\quad 
t\to\infty$$ for every derivative $D^\alpha u$. 
\end{theorem}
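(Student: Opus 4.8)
The plan is to squeeze $u(\cdot,t)$ between two one-sided statements, both obtained from Theorem \ref{half space avoid thm}. First I would prove the upper bound $\limsup_{t\to\infty}\sup_{\R^n}u(\cdot,t)\le0$. Given any $\epsilon>0$, condition \eqref{close to k eq} with $k\equiv0$ (note a linear cone $k$ may be taken to be $0$ here, but in fact the theorem as stated has $k$ appearing only through \eqref{close to k eq}, so $u_0\to 0$ at infinity in the relevant sense; more precisely \eqref{close to k eq} gives $\sup_{\R^n\setminus B_r}|u_0|\to 0$, hence in particular $u_0<\epsilon$ off a large ball) puts us exactly in the hypotheses of Theorem \ref{half space avoid thm}, which yields $\limsup_{t\to\infty}\sup_{\R^n}u(\cdot,t)\le0$. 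Applying the same reasoning to $-u$, which also solves \eqref{mcf} (the equation is odd) and whose initial data $-u_0$ likewise tends to $0$ at infinity, gives $\limsup_{t\to\infty}\sup_{\R^n}(-u(\cdot,t))\le0$, i.e. $\liminf_{t\to\infty}\inf_{\R^n}u(\cdot,t)\ge0$. Combining, $\sup_{\R^n}|u(\cdot,t)|\to0$ as $t\to\infty$, which is the first assertion.

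For the derivative estimates I would argue by interpolation together with a compactness/rescaling argument. The key inputs are the interior a priori estimates of Ecker--Huisken, \cite[Theorems 2.3, 3.1 and 3.4]{EckerHuiskenInvent}: once $t\ge1$, the gradient $|Du(\cdot,t)|$ is bounded uniformly (using that $u$ stays in a slab of bounded width by the $C^0$-convergence just proved, together with comparison with large spheres to control the solution on a fixed time step), and then all higher spatial derivatives $D^\alpha u(\cdot,t)$ are bounded uniformly for $t\ge2$, say. Now suppose, for contradiction, that some $\|D^\alpha u(\cdot,t)\|_{L^\infty}$ does not go to $0$: then there are $t_j\to\infty$, points $x_j$, and $\theta>0$ with $|D^\alpha u(x_j,t_j)|\ge\theta$. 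Translate in space so that $x_j=0$ and set $u_j(x,t):=u(x,t+t_j)$ for $t\in[-1,1]$; by the uniform estimates, a subsequence converges in $C^\infty_{loc}$ to a solution $u_\infty$ of \eqref{mcf} on $\R^n\times[-1,1]$. But $\|u(\cdot,t)\|_{L^\infty(\R^n)}\to0$ forces $u_\infty\equiv0$, whence $D^\alpha u_\infty(0,0)=0$, contradicting $|D^\alpha u_j(0,0)|\ge\theta$ passing to the limit. Hence every $\|D^\alpha u(\cdot,t)\|_{L^\infty}\to0$.

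The step I expect to be the main obstacle is not the maximum-principle squeeze — that is essentially a citation of Theorem \ref{half space avoid thm} applied twice — but making the passage from $C^0$-decay to decay of all derivatives fully rigorous. Two points need care: (a) justifying the \emph{uniform} (in $t\ge1$) gradient bound, since the Ecker--Huisken interior estimate a priori needs a local height bound, which here is supplied by the slab containment coming from the already-established uniform $C^0$ convergence plus a one-time comparison with spheres to handle $t\in[0,1]$; and (b) the translation-and-limit argument requires the uniform higher-order bounds to be genuinely independent of the center point $x_j$, which again follows from Ecker--Huisken since their estimates depend only on a local height bound, uniform here. Alternatively (b) can be bypassed entirely by pure interpolation: if one has a uniform bound $\|D^{\alpha}u(\cdot,t)\|_{L^\infty}\le C$ for all multi-indices $|\alpha|\le m+1$ and $\|u(\cdot,t)\|_{L^\infty}\to0$, then Gagliardo--Nirenberg-type interpolation gives $\|D^\alpha u(\cdot,t)\|_{L^\infty}\le C'\|u(\cdot,t)\|_{L^\infty}^{1-|\alpha|/(m+1)}\to0$ for $|\alpha|\le m$; since $m$ is arbitrary this covers every derivative. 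I would present the interpolation version, as it is shortest and self-contained given the cited estimates.
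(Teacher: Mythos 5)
Your proposal matches the paper's own argument: the paper likewise obtains the upper bound from Theorem \ref{half space avoid thm}, gets the lower bound ``similarly'' (i.e.\ by the symmetry $u\mapsto-u$ you spell out), and then deduces decay of all derivatives from uniform interior (Ecker--Huisken) derivative estimates combined with interpolation inequalities. Your extra detail on the height/gradient bounds and the alternative compactness argument is fine but not a different route; the interpolation version you choose to present is exactly the paper's proof.
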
 
\begin{proof} 
Theorem \ref{half space avoid thm} yields the upper bound; the lower 
bound follows similarly. This implies $C^0$-convergence. Uniform 
interior derivative estimates and interpolation inequalities yield 
$C^k$-convergence for any $k\in\N$. 
\end{proof}  
 
\section{Uniform Stability for Families of Solutions}  
\label{uniform appendix}  
 
Our stability result extends to families of solutions as follows. 
\begin{theorem}  
Let $k$ and $U$ be as in Theorem \ref{main thm}.  Let $(u^i)_i$ be a  
family of functions such that each function fulfills the conditions on  
$u$ in Theorem \ref{main thm}. If \eqref{close to k eq} is uniformly  
fulfilled in the sense that  
$$\sup\limits_i\sup\limits_{\R^n\setminus
B_r(0)}|u^i(\cdot,0)-k|\to0\quad \text{as}\quad r\to\infty,$$ then
$$\sup\limits_i \left\Vert D^\alpha(u^i-U)(\cdot,t)
\right\Vert_{L^\infty\left(\R^n\right)}\to0 \quad\text{as}\quad
t\to\infty,$$ for any derivative $D^\alpha$.
\end{theorem}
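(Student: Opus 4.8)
The plan is to reduce the uniform family statement to the quantitative content already established in the proof of Theorem \ref{main thm}. The key observation is that every estimate in that proof depends on $u^i$ only through the \emph{modulus} $\omega$ governing \eqref{close to k eq}, i.e. through a function $\omega:\R_+\to\R_+$ with $\omega(r)\to0$ as $r\to\infty$ and $\sup_{\R^n\setminus B_r(0)}|u^i(\cdot,0)-k|\le\omega(r)$ for all $i$; the uniform hypothesis is precisely the statement that such a common $\omega$ exists. So first I would fix $\epsilon>0$ and retrace the first part of the proof of Theorem \ref{main thm}, checking that for each quantity produced there I can choose it depending only on $\epsilon$, $k$, $U$ and $\omega$, never on the individual index $i$.

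Concretely, the proof of Theorem \ref{main thm} produces, for given $\delta>0$ with $\epsilon=2\delta$: (a) a time $t_\delta$ with $u^i(\cdot,t_\delta)\ge k-\delta$, supplied by Lemma \ref{t delta ex lem}; and (b) a time $T$ with $U(\cdot,T)+\epsilon\ge u^i_0$ on $\R^n$, chosen via \eqref{U scaling} once the radius $r$ with $|u^i_0-k|<\epsilon$ on $\R^n\setminus B_r(0)$ is fixed. For (b), since $\omega(r)\to0$ we may pick a single $r=r(\epsilon)$ with $\omega(r)<\epsilon$, valid for all $i$ simultaneously, and then a single $T=T(\epsilon)$. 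For (a), I would reinspect the proof of Lemma \ref{t delta ex lem}: the barrier $b$ and its rescalings $b^\lambda$ depend only on $k$; the number $m=-\inf(u^i_0-k)$ is bounded above by $\sup_{B_R}|u^i_0-k|+\omega(R)$, which — while not uniformly bounded a priori — can be controlled once we note that the construction only requires $\lambda m_1>m$ and $\lambda R_1>R$, and the conclusion $u^i(\cdot,t)\ge k-\delta$ for $t\ge t_\delta$ holds with $t_\delta$ depending on $\lambda$. The point is to bound $m$ uniformly: here I would use that $u^i_0$ is merely $C^0_{loc}$, so individual $m$ need not be uniformly bounded, and instead argue that the \emph{time} $t_\delta$ needed still has a uniform choice because the dependence of $t_\delta$ on $m$ is through $U(x,t)-m\ge k(x)$ for $|x|\le r$, and $\omega$ controls everything outside $B_R$. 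If a genuinely uniform bound on $m$ fails, the fallback is to observe that only $u^i(\cdot,t_\delta)\ge k-\delta$ in a fixed ball is needed together with the decay outside, which $\omega$ gives directly; this is the step I expect to require the most care.

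Once $t_\delta=t_\delta(\epsilon)$ and $T=T(\epsilon)$ are chosen uniformly in $i$, the sandwich
$$U(\cdot,t)-\epsilon\le u^i(\cdot,t+t_\delta)\le U(\cdot,t+t_\delta+T)+\epsilon$$
holds for every $i$, and Lemma \ref{U converges lem} gives a time $t^*=t^*(\epsilon)$ — independent of $i$ since $U$ is the same for all of them — with $\|U(\cdot,s+t_\delta+T)-U(\cdot,s)\|_{L^\infty}<\epsilon$ for $s\ge t^*$. Hence $\sup_i\|u^i(\cdot,t)-U(\cdot,t)\|_{L^\infty(\R^n)}\le3\epsilon$ for all $t\ge t^*+t_\delta$. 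Since $\epsilon>0$ was arbitrary this proves the $C^0$ part. For the higher derivatives, I would invoke the uniform interior gradient and higher-derivative estimates of \cite{EckerHuiskenInvent}: these apply to each $u^i$ on $\R^n\times[1,\infty)$ with constants depending only on an $L^\infty$ bound, which the sandwich provides uniformly in $i$ for large $t$. Interpolating the uniform $C^0$ decay against these uniform higher-order bounds yields $\sup_i\|D^\alpha(u^i-U)(\cdot,t)\|_{L^\infty}\to0$ for every $\alpha$, completing the proof. The treatment of the convex case and the low-dimensional case $n\le2$ is analogous, using the uniformity already built into Appendix \ref{Rn stable} and Appendix \ref{uniform appendix} as invoked in the third part of the proof of Theorem \ref{main thm}.
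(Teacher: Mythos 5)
Your route is genuinely different from the paper's, and it has a gap at exactly the point you flag as delicate. Retracing the proof of Theorem \ref{main thm} constant-by-constant only works if every constant depends on the common modulus $\omega$ alone, and this fails twice: (i) the time $T$ of the upper barrier is chosen via \eqref{U scaling} so that $U(\cdot,T)\ge u^i_0$ on the fixed ball $B_r(0)$, so $T$ depends on $\sup_{B_r(0)}(u^i_0-k)$, not only on $\epsilon$ and $r$ --- your assertion of "a single $T=T(\epsilon)$" is unjustified; (ii) in Lemma \ref{t delta ex lem} both $\lambda$ and hence $t_\delta$ depend on $m=-\inf(u^i_0-k)$. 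The hypothesis controls $|u^i_0-k|$ only outside large balls, so neither quantity is controlled uniformly in $i$ by $\omega$. Your fallback --- that only $u^i(\cdot,t_\delta)\ge k-\delta$ on a fixed ball is needed and "$\omega$ gives this directly" --- is not a proof: $\omega$ says nothing about $u^i_0$ inside the ball, and the only mechanism in the argument that lifts $u^i$ above $k-\delta$ there (respectively keeps it below $U(\cdot,T)+\epsilon$) is the barrier comparison, whose parameters are precisely the uncontrolled quantities $m$ and $\sup_{B_r}u^i_0$. In short, some uniform local control of the family (an enveloping pair of admissible initial data) is an unavoidable extra input, and your write-up neither supplies it nor isolates it cleanly.

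The paper's proof makes this input explicit and then becomes one line: fix $\epsilon>0$ and choose two initial data $u^\pm(\cdot,0)$ satisfying \eqref{close to k eq} with $u^-(\cdot,0)\le u^i(\cdot,0)\le u^+(\cdot,0)$ for all $i$ (this is where the uniform hypothesis enters), let $u^\pm$ solve \eqref{mcf}, and use $u^++\epsilon$ and $u^--\epsilon$ as barriers for every $u^i$ via the same comparison principle as in the main proof. Theorem \ref{main thm} is then applied only to the two solutions $u^\pm$, and the sandwich $u^--\epsilon\le u^i\le u^++\epsilon$ yields $\sup_i\|u^i(\cdot,t)-U(\cdot,t)\|_{L^\infty}\le\epsilon+o(1)$; uniform interior estimates and interpolation give the higher-order statement exactly as you describe. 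What the envelope trick buys is that no constant in the proof of Theorem \ref{main thm} ever needs to be tracked in $i$: all $i$-dependence is absorbed into the single comparison with $u^\pm$. If you want to salvage your approach, the honest fix is the same one: either assume (or extract from the intended reading of the hypothesis) that the family admits such envelopes, and then your two problematic choices of $T$ and $t_\delta$ can be made for $u^\pm$ instead of for each $u^i$.
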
  
\begin{proof}  
Fix $\epsilon>0$.  Use $u^++\epsilon$ and $u^--\epsilon$ as barriers  
for $u^i$, where $u^\pm$ are solutions to \eqref{mcf} such that  
$u^+(\cdot,0)\ge u^i(\cdot,0)\ge u^-(\cdot,0)$ and $u^\pm(\cdot,0)$  
fulfill the assumptions on $u_0$ of Theorem \ref{main thm}.  
\end{proof}  
  
\end{appendix}  
  
\bibliographystyle{amsplain}  
%\bibliography{biblio}  

\def\weg#1{} \def\unterstrich{\underline{\rule{1ex}{0ex}}} \def\cprime{$'$}
  \def\cprime{$'$} \def\cprime{$'$} \def\cprime{$'$}
\providecommand{\bysame}{\leavevmode\hbox to3em{\hrulefill}\thinspace}
\providecommand{\MR}{\relax\ifhmode\unskip\space\fi MR }
% \MRhref is called by the amsart/book/proc definition of \MR.
\providecommand{\MRhref}[2]{%
  \href{http://www.ams.org/mathscinet-getitem?mr=#1}{#2}
}
\providecommand{\href}[2]{#2}

\end{document}